\definecolor{gr}{rgb}{0.1, .5 , .10}
\newtheorem{theorem}{Theorem}[section]
\newtheorem{theorem*}{Theorem}
\newtheorem{corollary}[theorem]{Corollary}
\newtheorem{corollary*}[theorem*]{Corollary}
\newtheorem{proposition}[theorem]{Proposition}
\theoremstyle{definition}
\newtheorem{definition}[theorem]{Definition}
\newtheorem{remark}[theorem]{Remark}
\newtheorem*{question*}{Question}
\newtheorem*{conjecture*}{Conjecture}
\newtheorem{example}[theorem]{Example}
\newtheorem*{notation*}{Notation}
\newtheorem*{claim*}{Claim}
\numberwithin{equation}{theorem}
\def\Ext{\operatorname{Ext}}
\def\rad{\operatorname{rad}}
\def\soc{\operatorname{soc}}
\def\Hom{\operatorname{Hom}}
\def\sCM{\operatorname{\underline{\mathsf{CM}}}}
\def\Db{\mathsf{D}^{\rm b}}
\def\thick{\operatorname{\mathsf{thick}}}
\def\add{\operatorname{\mathsf{add}}}
\def\mod{\operatorname{\mathsf{mod}}}
\def\smod{\operatorname{\underline{\mathsf{mod}}}}
\def\proj{\operatorname{\mathsf{proj}}}
\def\inj{\operatorname{\mathsf{inj}}}
\def\Db{\mathsf{D^b}}
\def\Kb{\mathsf{K^b}}
\def\ds{\mathsf{D_{sg}}}
\def\id{\mathrm{inj}.\!\dim}
\def\sttilt{\operatorname{\mathsf{s\tau -tilt}}}
\def\silt{\operatorname{\mathsf{silt}}}
\def\A{\mathcal{A}}
\def\C{\mathcal{C}}
\def\T{\mathcal{T}}
\def\H{\mathcal{H}}
\renewcommand{\S}{\mathcal{S}}
\newcommand{\op}{{\rm op}}
\begin{document}
\setlength{\baselineskip}{15pt}
\title{Report on the finiteness of silting objects}
\author{Takuma Aihara}
\address{Department of Mathematics, Tokyo Gakugei University, 4-1-1 Nukuikita-machi, Koganei, Tokyo 184-8501, Japan}
\email{aihara@u-gakugei.ac.jp}
\author{Takahiro Honma}
\address{Department of Mathematics, Tokyo University of science, 1-3 Kagurazaka, Shinjuku, Tokyo 162-8601, Japan}
\email{1119704@ed.tus.ac.jp}
\author{Kengo Miyamoto}
\address{Department of General Education, National Institute of Technology (KOSEN), Yuge College, Ochi, Ehime 794-2506, Japan}
\email{k\_miyamoto@yuge.ac.jp}
\author{Qi Wang}
\address{Department of Pure and Applied Mathematics, Graduate School of Information
Science and Technology, Osaka University, Suita, Osaka 565-0871, Japan}
\email{q.wang@ist.osaka-u.ac.jp}

\keywords{silting object, support $\tau$-tilting module, $\tau$-tilting-finite}
\thanks{2010 {\em Mathematics Subject Classification.} 16G20, 16G60}
\thanks{TA was partly supported by JSPS Grant-in-Aid for Young Scientists 19K14497.}
\thanks{KM was partly supported by JSPS KAKENHI 18J10561
and Association for Advancement of Technology, NIT (KOSEN), Yuge college.}
\begin{abstract}
We discuss the finiteness of (two-term) silting objects.
First, we investigate new triangulated categories without silting object.
Second, one studies two classes of $\tau$-tilting-finite algebras and give the numbers of their two-term silting objects.
Finally, we explore when $\tau$-tilting-finiteness implies representatoin-finiteness, and obtain several classes of algebras in which a $\tau$-tilting-finite algebra is representation-finite. 
\end{abstract}
\maketitle
\section{Introduction}

In this paper, we discuss three subjects on the finiteness of (two-term) silting objects.

The case we should first consider is when the number of silting objects is zero;
that is, the question is which triangulated categories have no silting object.
For example, the bounded derived category $\Db(\mod\Lambda)$ over a finite dimensional algebra $\Lambda$ has no non-zero silting object if and only if $\Lambda$ has infinite global dimension.
When $\Lambda$ is non-semisimple selfinjective, its stable module category $\smod\Lambda$ admits no silting object.
(See \cite{AI}.)
Inspired by these two cases, we ask if the singularity category $\ds(\Lambda)$ of $\Lambda$ has no non-zero silting object.
Here is the first main theorem of this paper.

\begin{theorem*}[Theorem \ref{ns} and Corollary \ref{IG}]
$\ds(\Lambda)$ admits no non-zero silting object if $\Lambda$ has finite right selfinjective dimension.
In particular, the stable category of the Cohen--Macaulay category over an Iwanaga--Gorenstein algebra has no non-zero silting object.
\end{theorem*}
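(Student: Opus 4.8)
The plan is to assume for contradiction that $\ds(\Lambda)$ admits a non-zero silting object $S$ and to show that a module representing $S$ must have finite projective dimension, which is impossible. Two reductions are free. If $\gldim\Lambda<\infty$ then $\ds(\Lambda)=0$, so we may assume $\gldim\Lambda=\infty$. Since $\Omega X\cong X[-1]$ in $\ds(\Lambda)$, every object of $\ds(\Lambda)$ is isomorphic, up to shift, to the image of a finitely generated $\Lambda$-module; replacing $S$ by a shift we may therefore take $S=M$ for a module $M$, and then $M\notin\Kb(\proj\Lambda)$ --- that is, $\pd_\Lambda M=\infty$ --- because otherwise $M=0$ in $\ds(\Lambda)$ and $S=0$.

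First I would extract from the silting axioms alone the following vanishing. The full subcategory of those $X\in\ds(\Lambda)$ with $\Hom_{\ds(\Lambda)}(M,X[i])=0$ for all $i\gg0$ is closed under shifts, cones and direct summands, hence is thick; it contains $M$ because $M$ is presilting; and $\thick(M)=\ds(\Lambda)$. So this subcategory is all of $\ds(\Lambda)$: for every object $X$, and in particular for $X=\Lambda/\rad\Lambda$, one has $\Hom_{\ds(\Lambda)}(M,X[i])=0$ for $i\gg0$.

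The key step, where the hypothesis enters, is to identify such Hom-groups with $\Ext$-groups. Finiteness of the right self-injective dimension of $\Lambda$ provides a bound $d$ with $\Ext^{j}_\Lambda(-,\Lambda)=0$, hence $\Ext^{j}_\Lambda(-,Q)=0$ for every projective $Q$, for all $j>d$. On the other hand, resolving by iterated syzygies and shifting dimension in the first variable --- which needs no hypothesis --- gives, for all finitely generated $A,B$,
$$\Hom_{\ds(\Lambda)}(A,B[i])\;\cong\;\varinjlim_{n}\Ext^{n}_\Lambda\big(A,\Omega^{n-i}B\big).$$
Using $\Ext^{>d}(-,\text{projective})=0$ one may now also peel a projective resolution of $B$ off $\Omega^{n-i}B$ inside this colimit; it then stabilises, so that $\Hom_{\ds(\Lambda)}(A,B[i])\cong\Ext^{i}_\Lambda(A,B)$ for every $i>d$.

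Finally, combining the two steps with $A=M$ and $B=\Lambda/\rad\Lambda$ yields $\Ext^{i}_\Lambda(M,\Lambda/\rad\Lambda)=0$ for $i\gg0$; as $\pd_\Lambda M=\sup\{i:\Ext^{i}_\Lambda(M,\Lambda/\rad\Lambda)\neq0\}$ over a finite-dimensional algebra, this forces $\pd_\Lambda M<\infty$, contradicting $\pd_\Lambda M=\infty$. Corollary~\ref{IG} then follows at once, since an Iwanaga--Gorenstein algebra has finite right self-injective dimension and $\sCM(\Lambda)\simeq\ds(\Lambda)$ (Buchweitz), so $\sCM(\Lambda)$ has no non-zero silting object. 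The only point I expect to need genuine care is the key step: the colimit presentation of $\Hom_{\ds(\Lambda)}(A,B[i])$ (cofinality of the syzygy tower in the calculus of fractions defining the Verdier quotient) together with the verification that, under $\Ext^{>d}(-,\text{projective})=0$, the second-variable dimension shift makes that colimit eventually stationary with value $\Ext^{i}_\Lambda(A,B)$; the remaining steps are formal.
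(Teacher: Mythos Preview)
Your argument is correct and reaches the conclusion by a genuinely different route from the paper's.

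The paper works upstairs in $\Db(\mod\Lambda)$ and invokes the silting reduction theorem of Iyama--Yang: one checks that for every $X\in\Db(\mod\Lambda)$ both $\Hom_{\Db}(\Lambda,X[\ell])$ and $\Hom_{\Db}(X,\Lambda[\ell])$ vanish for $\ell\gg0$ (the second via the Nakayama functor, writing $\Lambda\simeq\nu P$ for some $P\in\Kb(\proj\Lambda)$ since $\id\Lambda_\Lambda<\infty$, and then using Serre duality). Silting reduction then yields a bijection $\silt_\Lambda\Db(\mod\Lambda)\to\silt\ds(\Lambda)$, and the left-hand side is already known to be $\{\Lambda\}$ or empty according to whether $\gldim\Lambda$ is finite. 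Your approach stays entirely inside $\ds(\Lambda)$: you exploit the standard colimit presentation $\Hom_{\ds}(A,B[i])\cong\varinjlim_n\Ext^n_\Lambda(A,\Omega^{n-i}B)$, observe that the hypothesis $\id\Lambda_\Lambda\le d$ forces the transition maps to be isomorphisms once $n>d$ and identifies the stable value with $\Ext^i_\Lambda(A,B)$ for $i>d$, and then contradict $\pd_\Lambda M=\infty$ by testing against $\Lambda/\rad\Lambda$. Your argument is more elementary in that it avoids the Iyama--Yang machinery; the paper's argument is shorter once that theorem is available and makes the role of $\Lambda$ as a presilting object in $\Db(\mod\Lambda)$ more transparent.

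One small wording issue: in your colimit formula the syzygies are taken in the \emph{second} variable (you are replacing $B[i]$ by $\Omega^{n-i}B[n]$ via the cofinal system $B\to\Omega B[1]\to\Omega^2B[2]\to\cdots$), not the first; the phrase ``shifting dimension in the first variable'' is misleading, though the displayed formula and the subsequent use are correct. The cofinality of this syzygy tower in the calculus of right fractions is, as you note, the one point requiring care; it follows because any $B\to B'$ with perfect cone $Q$ admits a further map to $\Omega^nB[n]$ once $n$ exceeds the amplitude of $Q$.
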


Next, we restrict our viewpoint from silting objects to two-term silting ones.
Thanks to Adachi--Iyama--Reiten, there is a one-to-one correspondence between two-term silting objects and support $\tau$-tilting modules \cite{AIR}.
We call an algebra \emph{$\tau$-tilting-finite} provided there are only finitely many support $\tau$-tilting modules.
For example, we know the following $\tau$-tilting-finite algebras:
representation-finite algebras, preprojective algebras of Dynkin type \cite{M} and algebras of dihedral, semi-dihedral and quaternion type \cite{EJR} and so on.

The second subject of this paper is to give two new classes of $\tau$-tilting-finite algebras.
One is the class of weakly symmetric algebras of tubular type with non-singular Cartan matrix \cite{BS1, BHS}.
The other is the class of non-standard selfinjective algebras which are socle equivalent to selfinjective algebras of tubular type \cite{BS2, BHS}.
Here is the second main theorem of this paper.
(See Figure \ref{Lwst} and \ref{LLs} for the notation of $A_i$'s and $\Lambda_i$'s.)

\begin{theorem*}[Theorem \ref{finiteness1} and Theorem \ref{finiteness2}]
\begin{enumerate}
\item Any weakly symmetric algebra of tubular type with non-singular Cartan matrix is $\tau$-tilting-finite. In particular, we have the number of support $\tau$-tilting modules:
\[\def\arraystretch{1.5}
\begin{array}{|@{\hspace{7pt}}c@{\hspace{7pt}}|@{\hspace{7pt}}c@{\hspace{7pt}}|@{\hspace{7pt}}c@{\hspace{7pt}}|@{\hspace{7pt}}c@{\hspace{7pt}}|@{\hspace{7pt}}c@{\hspace{7pt}}|@{\hspace{7pt}}c@{\hspace{7pt}}|@{\hspace{7pt}}c@{\hspace{7pt}}|@{\hspace{7pt}}c@{\hspace{7pt}}|}\hline
A_1(\lambda) & A_2(\lambda) & A_3 & A_4 & A_5 & A_6 & A_7 & A_8 \\\hline
24&6&192&132&8&8&108& 100 \\\hline\hline
A_9 & A_{10} & A_{11} & A_{12} & A_{13} & A_{14} & A_{15} & A_{16}\\\hline
108 &116 & 100 & 32 & 28 & 32 & 30 & 30 \\\hline
\end{array}
\]

\item Any non-standard selfinjective algebra which is socle equivalent to a selfinjective algebra of tubular type is $\tau$-tilting-finite.
In particular, we have the number of support $\tau$-tilting modules:
\[\def\arraystretch{1.5}
\begin{array}{|@{\hspace{7pt}}c@{\hspace{7pt}}|@{\hspace{7pt}}c@{\hspace{7pt}}|@{\hspace{7pt}}c@{\hspace{7pt}}|@{\hspace{7pt}}c@{\hspace{7pt}}|@{\hspace{7pt}}c@{\hspace{7pt}}|@{\hspace{7pt}}c@{\hspace{7pt}}|@{\hspace{7pt}}c@{\hspace{7pt}}|@{\hspace{7pt}}c@{\hspace{7pt}}|@{\hspace{7pt}}c@{\hspace{7pt}}|@{\hspace{7pt}}c@{\hspace{7pt}}|}\hline
\Lambda_1 & \Lambda_2& \Lambda_3(\lambda)  & \Lambda_4 & \Lambda_5 & \Lambda_6 & \Lambda_7 & \Lambda_8 &\Lambda_9&\Lambda_{10} \\\hline
8&8&6&32&28&32&30&30&192&\geq500\\\hline
\end{array}
\]
\item Every algebra as in (1) and (2) is tilting-discrete.
\end{enumerate}
\end{theorem*}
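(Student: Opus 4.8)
\emph{Overall strategy.} All three parts rest on first reducing to a finite, completely explicit list of bound quiver algebras. By the classification of Bia{\l}kowski--Skowro\'nski and Bia{\l}kowski--Holm--Skowro\'nski \cite{BS1,BS2,BHS}, every weakly symmetric algebra of tubular type with non-singular Cartan matrix is isomorphic to one of the algebras $A_1(\lambda),\dots,A_{16}$ (with $\lambda\in k\setminus\{0,1\}$ in the two parametrised cases), and every non-standard selfinjective algebra socle equivalent to a selfinjective algebra of tubular type is isomorphic to one of $\Lambda_1,\dots,\Lambda_{10}$; the bound quivers are the ones listed in Figures \ref{Lwst} and \ref{LLs}. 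Thus it suffices to analyse these finitely many algebras one at a time.

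\emph{Parts (1) and (2).} Fix such an algebra $A$. By Adachi--Iyama--Reiten \cite{AIR}, the two-term silting complexes in $\Kb(\proj A)$ correspond bijectively to the support $\tau$-tilting $A$-modules, and the support $\tau$-tilting quiver of $A$ is connected; consequently every support $\tau$-tilting module arises from $A$ itself by a finite sequence of mutations. We therefore run a breadth-first search through this quiver, starting at $A$ and repeatedly applying all irreducible left and right mutations, each mutation being computed from the explicit relations of $A$ (with computer assistance for the larger algebras). In every case the search closes up after finitely many steps; this establishes $\tau$-tilting-finiteness, and counting the vertices produced yields the numbers in the two tables. For the parametrised families $A_1(\lambda)$, $A_2(\lambda)$ and $\Lambda_3(\lambda)$ we check in addition that none of the $\Hom$- and $\Ext$-computations governing the mutations depends on the value of $\lambda$, so that the count is uniform in $\lambda$.

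\emph{Part (3).} By the reduction theory for silting complexes \cite{AI} and the characterisation of silting-discreteness, a selfinjective algebra $A$ is silting-discrete as soon as, for every silting complex $T$ in $\Kb(\proj A)$, the interval $[T[1],T]$ of the silting poset is finite; and this interval is finite precisely when $\End_{\Kb(\proj A)}(T)$ is $\tau$-tilting-finite. These endomorphism algebras run, up to Morita equivalence, through a finite family of selfinjective algebras attached to tubular type (the selfinjective algebras of tubular type together with the non-standard algebras socle equivalent to them), a closure statement that is again part of the classification of \cite{BS1,BS2,BHS}, and each member of that family is $\tau$-tilting-finite --- either by Parts (1) and (2), or, for the remaining selfinjective algebras of tubular type (for instance those with singular Cartan matrix), by the same mutation computation or by the results of \cite{EJR}. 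Hence every $A$ as in (1) or (2) is silting-discrete, and in particular tilting-discrete. This argument also yields $\tau$-tilting-finiteness for all of these algebras independently of the explicit counts; for $\Lambda_{10}$, where the breadth-first search only certifies at least $500$ support $\tau$-tilting modules, finiteness is obtained in exactly this way while the precise number is left open.

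\emph{Main obstacle.} The substantial work lies in Part (2): the mutation computations are long and error-prone for the algebras with many support $\tau$-tilting modules --- notably $A_3$ (192), $\Lambda_9$ (192) and $\Lambda_{10}$ --- and in Part (3) one must verify carefully that the list of endomorphism algebras $\End_{\Kb(\proj A)}(T)$ is genuinely exhausted by the classification, so that no new $\tau$-tilting-infinite algebra can appear and break the silting-discreteness argument; ensuring the parametrised families behave uniformly in $\lambda$ is a further, more routine, point of care.
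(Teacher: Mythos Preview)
Your approach and the paper's diverge substantially. For Part~(1) the paper does \emph{not} argue by mutation search: it observes that each $A_i$ with $i\neq 3$ is symmetric with positive-definite Cartan matrix and invokes \cite[Theorem~13]{EJR}, which immediately gives $\tau$-tilting-finiteness; $A_3$ is the preprojective algebra of type $\mathbb{D}_4$, handled by \cite{M}. For the counts (and for all of Part~(2)), the central device is the reduction theorem \cite[Theorem~11]{EJR}: if $I$ lies in the centre and the radical then $\sttilt\Lambda\simeq\sttilt(\Lambda/I)$ as posets. The paper exhibits explicit central radical elements in each $A_i$ and $\Lambda_i$, passes to quotients $\overline{A_i}$, $\overline{\Lambda_i}$, and then identifies these with known algebras (preprojective algebras, Nakayama algebras, Brauer-tree quotients) or with each other (in fact $\overline{\Lambda_i}\simeq\overline{A_{j(i)}}$ for suitable $j(i)$, and $\Lambda_8\simeq\Lambda_7^{\op}$), so most numbers are read off rather than searched. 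Your brute-force search is valid where it terminates, but it misses these structural shortcuts, and in particular the uniformity in $\lambda$ is automatic from the reduction rather than something to be checked by hand.

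For Part~(3) the paper's argument is just: by \cite{BHS} every algebra in (1) is derived equivalent to some $A_i$, and \cite[Corollary~2.11]{AM} then yields tilting-discreteness from $\tau$-tilting-finiteness. Your version has two problems. First, you pass freely between silting and tilting, but for selfinjective non-symmetric algebras (such as $A_3$) a silting complex need not be tilting and its endomorphism algebra need not be selfinjective, so your ``closure under endomorphism algebras'' claim fails as stated for silting; you should work with tilting complexes throughout. Second, your proposed route to the $\tau$-tilting-finiteness of $\Lambda_{10}$ is circular: the criterion you invoke requires $\End_{\Kb(\proj\Lambda_{10})}(T)$ to be $\tau$-tilting-finite for every tilting $T$, but taking $T=\Lambda_{10}$ gives $\End(T)=\Lambda_{10}$, so you are assuming precisely what you set out to prove. (The paper does not resolve $\Lambda_{10}$ here either --- it is deferred to a forthcoming paper --- but it does not claim to derive its finiteness from tilting-discreteness.)
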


The last subject is on ``representation-finiteness vs. $\tau$-tilting-finiteness''.
Evidently, a representation-finite algebra is $\tau$-tilting-finite, but the converse does not necessarily hold.
Thus, we naturally ask when $\tau$-tilting-finiteness implies representation-finiteness.
A typical example is the hereditary case; that is, $\tau$-tilting-finite hereditary algebras are representation-finite.
For more examples, it was proved that $\tau$-tilting-finite cycle-finite algebras are representation-finite \cite{MS}.
Recently, the gentle case was verified; $\tau$-tilting-finite gentle algebras are representation-finite \cite{P}.
Now, we give new classes of algebras which are the case.

\begin{theorem*}[Corollary \ref{Zito}, \ref{separation}, Theorem \ref{rs0} and \ref{lh}]
The following algebras are representation-finite if they are $\tau$-tilting-finite:
\begin{enumerate}
\item quasitilted algebras;
\item algebras satisfying the separation condition;
\item the trivial extensions of tree quiver algebras with radical square zero;
\item locally hereditary algebras;
\end{enumerate}
\end{theorem*}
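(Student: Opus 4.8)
The plan is to prove each of the four items by contraposition: a representation-infinite algebra in the class is shown to be $\tau$-tilting-infinite. All four cases run through one reduction principle. By Demonet--Iyama--Jasso an algebra is $\tau$-tilting-finite if and only if it admits only finitely many bricks, and $\tau$-tilting-finiteness descends to every quotient $\Lambda/I$ (bricks over $\Lambda/I$ are bricks over $\Lambda$) and to every idempotent truncation $e\Lambda e$; in particular it descends to every convex subcategory, since a convex subcategory of a bound quiver algebra is exactly such an $e\Lambda e$. On the other hand, a hereditary algebra of Euclidean or wild type, a tame or wild concealed algebra, a canonical algebra, and a concealed-canonical algebra are all brick-infinite: in the tame cases each of the infinitely many homogeneous tubes in the separating tubular family contributes a regular brick, while in the wild cases one obtains infinitely many exceptional (hence brick) regular modules, which remain bricks after the preprojective/preinjective (co)tilting functor producing the concealed algebra from the hereditary one. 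So it suffices, for each class, to locate inside an arbitrary representation-infinite member $\Lambda$ a convex subcategory -- equivalently an idempotent truncation $e\Lambda e$ -- or a quotient, of one of these ``universally $\tau$-tilting-infinite'' types.

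For (1) I would appeal to the structure theory of quasitilted algebras. By Happel--Reiten--Smal{\o} a connected quasitilted algebra is tilted or quasitilted of canonical type, and by the classification work of Coelho--Skowro\'nski and Lenzing--Skowro\'nski a connected representation-infinite quasitilted algebra contains a convex subcategory which is a tame or wild concealed algebra, or a (possibly tubular) canonical or concealed-canonical algebra; the reduction principle then yields $\tau$-tilting-infiniteness. For (2), the separation condition forces $\Lambda$ to be triangular, so its Tits form $q_\Lambda$ is defined, and a Bongartz-type criterion valid under the separation condition says that $\Lambda$ is representation-finite precisely when $q_\Lambda$ is weakly positive. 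If $\Lambda$ is representation-infinite, pick a positive vector $v$ with $q_\Lambda(v)\le 0$ of minimal support; by Ovsienko's theorem on weakly positive unit forms together with Bongartz's classification of critical algebras, the convex subcategory supported on $v$ is tame concealed, and one concludes as before.

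For (3), write $A=kQ/R_Q^2$ with $Q$ a finite tree and form the trivial extension $T(A)=A\ltimes DA$, $D$ the $k$-dual. Since $DA$ is a square-zero ideal with $T(A)/DA\cong A$, the algebra $A$ is a quotient of $T(A)$; hence if $T(A)$ is $\tau$-tilting-finite then so is $A$, and by Adachi's characterization of $\tau$-tilting-finite radical-square-zero algebras (these are exactly the representation-finite ones) $A$ must be representation-finite, equivalently -- by Gabriel -- the separated quiver of $Q$ is a disjoint union of Dynkin quivers. It then remains to show that for such $A$ the trivial extension $T(A)$ is representation-finite; here one would use the Hughes--Waschb\"usch description of representation-finite trivial extensions (those of algebras iterated tilted of Dynkin type) after checking that a representation-finite radical-square-zero tree algebra is iterated tilted of Dynkin type -- or, for any recalcitrant $T(A)$, pass to $T(A)/\operatorname{soc}$ to expose a Euclidean subquiver and invoke the reduction principle directly.

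For (4), I would exploit that a locally hereditary algebra admits no ``short'' zero relations -- every nonzero morphism between indecomposable projectives is a monomorphism -- which rigidly constrains the bound quiver, and use this to show that a representation-infinite locally hereditary algebra contains a convex subcategory that is hereditary of non-Dynkin type (or tame/wild concealed), reducing once more to the principle above. The main obstacle, in all four items but most acutely in (3) and (4), is exactly this localization step: proving that representation-infiniteness \emph{forces} one of the universally $\tau$-tilting-infinite subquotients to appear. The brick count and the descent statements are soft; the genuine work is the class-specific structure theory -- packaged in (1) and (2) by the quasitilted trichotomy and the Tits-form criterion, but in (3) requiring one to reconcile ``$A$ representation-finite'' with ``$T(A)$ representation-finite'', and in (4) requiring a description of the shape of representation-infinite locally hereditary algebras.
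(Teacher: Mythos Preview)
There is a concrete error in your treatment of (3): Adachi's result does \emph{not} say that $\tau$-tilting-finite radical-square-zero algebras coincide with the representation-finite ones. The paper itself supplies a counterexample (Example~\ref{EXrs0}(1)): a radical-square-zero algebra on a non-tree acyclic quiver whose separated quiver has a component of underlying type $\widetilde{\mathbb{D}}_5$, hence representation-infinite, yet $\tau$-tilting-finite by \cite{Ad1}. Adachi's criterion is that every \emph{single} subquiver of $Q^s$ (one containing no pair $i,i'$) be Dynkin, which is strictly weaker than $Q^s$ itself being a union of Dynkin quivers. The tree hypothesis on $Q$ is precisely what guarantees that each connected component of $Q^s$ is already single, so that Adachi's criterion collapses to the representation-finiteness criterion; this is the combinatorial step the paper isolates. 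Once that is in place the paper proceeds essentially as you outline: $A$ is simply-connected with positive-definite quadratic form, hence iterated tilted of Dynkin type \cite{AS}, and then $T(A)$ is representation-finite by \cite{AHR}.

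For (1), (2) and (4) your routes are plausible but heavier than necessary, and in (4) you leave the key localization step open. The paper handles (1) and (2) uniformly by one observation: quasitilted algebras \cite{CH} and algebras satisfying the separation condition \cite[IX, Theorem 4.5]{ASS} both admit a preprojective component, and a $\tau$-tilting-finite algebra with a preprojective component is representation-finite (Proposition~\ref{wpc}). This already packages the ``tame concealed factor'' you are hunting for, via \cite[XIV, Theorem 3.1]{SS}, so the Happel--Reiten--Smal\o{} trichotomy, the Lenzing--Skowro\'nski classification, and the Ovsienko--Bongartz Tits-form machinery are all bypassed. For (4) the paper avoids your localization problem entirely: local hereditariness gives an acyclic quiver with no monomial relations; $\tau$-tilting-finiteness forbids extended-Dynkin subquivers, which then forces all possible commutativity relations; hence $\Lambda$ is strongly simply-connected, and one concludes by Wang's theorem \cite[Theorem 2.6]{W} that $\tau$-tilting-finite strongly simply-connected algebras are representation-finite.
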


\begin{notation*}
Throughout this paper, algebras are always assumed to be basic, indecomposable and finite dimensional over an algebraically closed field $K$.
Modules are finitely generated and right.
For an algebra $\Lambda$, we denote by $\mod\Lambda\ (\proj\Lambda, \inj\Lambda)$ the category of (projective, injective) modules over $\Lambda$.
\end{notation*}


\section{The existence of silting objects}

Let $\T$ be a Krull--Schmidt triangulated category which is $K$-linear and Hom-finite.
For example, we consider the bounded derived category $\Db(\mod\Lambda)$ and the perfect derived category $\Kb(\proj\Lambda)$ over an algebra $\Lambda$.
In this section, we explore when a triangulated category has no silting object.
Let us recall the definition of silting objects.

\begin{definition}
An object $T$ of $\T$ is said to be \emph{presilting} (\emph{pretilting}) if it satisfies $\Hom_\T(T, T[i])=0$ for any $i>0$ ($i\neq0$).
It is called \emph{silting} (\emph{tilting}) if in additional $\T=\thick T$.
Here, $\thick T$ stands for the smallest thick subcategory of $\T$ containing $T$.
We denote by $\silt\T$ the set of isomorphism classes of basic silting objects of $\T$.
\end{definition}

A typical example of silting objects is the stalk complex $\Lambda$ (and its shifts) in $\Kb(\proj\Lambda)$.
If we can find even one silting object, silting mutation produces infinitely many ones \cite{AI}.
However, we know triangulated categories with no silting object \cite[Example 2.5]{AI}.

Let $\Lambda$ be an algebra.
We denote by $\ds(\Lambda)$ the singularity category of $\Lambda$; that is, it is the Verdier quotient of $\Db(\mod\Lambda)$ by $\Kb(\proj\Lambda)$.
Here is the main result of this section.

\begin{theorem}\label{ns}
$\ds(\Lambda)$ has no non-zero silting object if $\id\Lambda_\Lambda<\infty$.
\end{theorem}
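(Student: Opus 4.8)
The plan is to show that the singularity category $\ds(\Lambda)$, when $\id\Lambda_\Lambda<\infty$, admits no non-zero presilting object at all, which forces the absence of non-zero silting objects. The key structural fact under the hypothesis $\id\Lambda_\Lambda=d<\infty$ is that the Verdier quotient functor $\Db(\mod\Lambda)\to\ds(\Lambda)$ behaves well with respect to Hom spaces in a range: for any $M\in\mod\Lambda$ one has $\Hom_{\Db}(M,M[i])\cong\Hom_{\ds}(M,M[i])$ once $i$ is large enough, because the finiteness of the injective dimension of $\Lambda$ controls the difference between morphisms in $\Db(\mod\Lambda)$ and in the quotient. Concretely, I would use that $\Kb(\proj\Lambda)=\Kb(\inj\Lambda)$ when $\id\Lambda_\Lambda<\infty$, so $\ds(\Lambda)$ is also the quotient of $\Db(\mod\Lambda)$ by $\Kb(\inj\Lambda)$, and then the standard computation (as in Buchweitz/Happel) shows $\Hom_{\ds(\Lambda)}(X,Y[i])\cong\varinjlim_{n}\Hom_{\Db}(\Omega^n X,\Omega^n Y[i])$ stabilises.

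First I would reduce to a presilting object $T$ and, replacing $T$ by a suitable shift of a module resolution, assume $T$ is (the image of) a module-like complex; in fact every object of $\ds(\Lambda)$ is isomorphic to a shift of a module, and even to $\Omega^n$ of a module for $n\gg0$, so we may take $T=M$ for some $M\in\mod\Lambda$ with no projective summands. Second, I would invoke the above description of Hom spaces in $\ds(\Lambda)$ together with the presilting condition $\Hom_{\ds}(M,M[i])=0$ for all $i>0$; combined with a shift, presilting actually yields vanishing of $\Hom_{\ds}(M,M[i])$ for all $i\neq 0$ since $\Omega$ is an autoequivalence of $\ds(\Lambda)$ and we can shift $M$ freely, so $M$ becomes \emph{pretilting}. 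Third, a pretilting object which generates gives a derived equivalence with $\End$, but $\ds(\Lambda)$ has no bounded $t$-structure / is "too small" in the sense that $\operatorname{id}$ in $\ds(\Lambda)$ would have to be both projective and injective after transport — here I would use that $\thick M=\ds(\Lambda)$ and that a tilting object forces the singularity category to be equivalent to $\Kb(\proj\End M)$, which has finite global dimension, contradicting that $\ds(\Lambda)\neq 0$ is "$\Omega$-periodic-like" (it has no nonzero object of finite projective dimension, as all objects are already maximal Cohen–Macaulay-type).

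Let me restructure the endgame more carefully, since that is the main obstacle. The cleanest route: suppose $T$ is a non-zero silting object of $\ds(\Lambda)$. Then $\thick T=\ds(\Lambda)$ and $\Hom(T,T[i])=0$ for $i>0$. Since $\ds(\Lambda)$ is $\Omega$-invariant (the syzygy is an autoequivalence, $\Omega=[-1]$ up to the usual shift when $\id\Lambda<\infty$ makes it Gorenstein-like behaviour — more precisely one should be careful here, so instead I would argue via the known fact, valid whenever $\id\Lambda_\Lambda<\infty$, that $\ds(\Lambda)\simeq\underline{\mathsf{CM}}(\Lambda)$-type categories have the property that $\Hom(X,Y[n])$ for $n\gg 0$ equals stable Homs over a self-injective-like situation). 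The real point, and the hard part, is to promote "presilting" to a contradiction with "$\thick T=\ds(\Lambda)$": I expect the argument to run by showing that in $\ds(\Lambda)$ any presilting object $T$ satisfies $\Hom(T,T[i])=0$ for \emph{all} $i$ except $0$ (using finite injective dimension to bound $\Hom(T,T[i])$ for $i<0$ as well, via duality $D\Hom_{\ds}(X,Y)\cong\Hom_{\ds}(Y,X[\,?\,])$-type Auslander–Reiten formulas that hold under the Gorenstein/finite-injdim hypothesis), hence $T$ is pretilting, hence $\End_{\ds(\Lambda)}(T)$ has finite global dimension and $\ds(\Lambda)\simeq\Kb(\proj\End T)$; but then $\ds(\Lambda)$ would contain a non-zero object of finite projective dimension, namely $\End T$ itself transported back, while every object of $\ds(\Lambda)$ is by construction "singular" — it has no projective resolution of finite length in $\Db$ unless it is zero in $\ds(\Lambda)$ — giving the contradiction. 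I expect the delicate technical step to be justifying the vanishing of $\Hom_{\ds}(T,T[i])$ for negative $i$ from $\id\Lambda_\Lambda<\infty$; this is where the hypothesis is really used, and I would isolate it as a lemma stating that under finite right injective dimension, $\ds(\Lambda)$ has Serre-type duality behaviour making presilting equivalent to pretilting, after which the finite-global-dimension contradiction closes the proof.
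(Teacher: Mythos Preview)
Your approach has a genuine gap at the step ``presilting implies pretilting in $\ds(\Lambda)$'', and in fact this implication is false under the hypothesis $\id\Lambda_\Lambda<\infty$. Take $\Lambda$ symmetric and non-semisimple, so that $\id\Lambda_\Lambda=0$ and $\ds(\Lambda)=\smod\Lambda$. This category has Serre functor $\nu\Omega\simeq\Omega=[-1]$, i.e.\ it is $(-1)$-Calabi--Yau, and hence every nonzero object $T$ satisfies $\Hom(T,T[-1])\simeq D\Hom(T,T)\neq 0$; a nonzero presilting object, should one exist, is therefore \emph{never} pretilting. Your remark that one can ``shift $M$ freely'' does not help here, since shifting preserves both the presilting and the pretilting conditions simultaneously. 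The proposed endgame is also incomplete: even granting a tilting object $T$, you have not justified $\gldim\End_{\ds(\Lambda)}(T)<\infty$, and your intended contradiction---that $\ds(\Lambda)$ would ``contain a nonzero object of finite projective dimension''---is not meaningful inside the Verdier quotient, where every perfect complex is already zero by construction.

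The paper's proof takes a completely different route and never works internally in $\ds(\Lambda)$. It applies silting reduction \cite{AI,IY} to $\T=\Db(\mod\Lambda)$ with the presilting object $\Lambda$, so that $\S=\thick\Lambda=\Kb(\proj\Lambda)$ and $\T/\S=\ds(\Lambda)$. The hypothesis $\id\Lambda_\Lambda<\infty$ is used exactly once, to verify the boundedness condition $\Hom_\T(X,\Lambda[\ell])=0$ for $\ell\gg 0$ required by silting reduction: one writes $\Lambda\simeq\nu P$ in $\Kb(\inj\Lambda)$ for some $P\in\Kb(\proj\Lambda)$ and uses Nakayama duality to get $\Hom_\T(X,\Lambda[\ell])\simeq D\Hom(P[\ell],X)$, which vanishes for large $\ell$. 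Silting reduction then yields a bijection $\silt_\Lambda\Db(\mod\Lambda)\to\silt\ds(\Lambda)$, and the left-hand side is $\{\Lambda\}$ when $\gldim\Lambda<\infty$ and empty otherwise by \cite[Example 2.5(1)]{AI}. In either case $\ds(\Lambda)$ has no nonzero silting object. The obstruction thus lives upstairs in $\Db(\mod\Lambda)$, which is why no Serre-duality or pretilting argument inside the singularity category is needed.
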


To prove this theorem, silting reduction \cite{AI, IY} plays a crucial role.

In the rest, fix a presilting object $T$ of $\T$ and  define a subset $\silt_T\T$ of $\silt\T$ by
\[\silt_T\T:=\{P\in\silt\T\ |\ T\ \mbox{is a direct summand of } P \}.\]
Moreover, one puts $\S:=\thick T$.
The Verdier quotient of $\T$ by $\S$ is denoted by $\T/\S$.

Then, silting reduction \cite[Theorem 3.7]{IY} says:

\begin{theorem}\label{sr}
The canonical functor $\T\to \T/\S$ induces a bijection $\silt_T\T\to \silt\T/\S$ if any object $X$ of $\T$ satisfies $\Hom_\T(T, X[\ell])=0=\Hom_\T(X, T[\ell])$ for $\ell\gg0$.
\end{theorem}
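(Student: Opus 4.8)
The plan is to prove the bijection not by manipulating roofs in the Verdier quotient directly, but by first identifying $\T/\S$ with a concretely described subquotient of $\T$ on which silting objects are visible, following the silting-reduction strategy. Write $\mathcal{P}:=\add T$ and introduce the two one-sided orthogonals
\[
{}^{\perp_{>0}}T:=\{X\in\T\mid \Hom_\T(X,T[i])=0\ (i>0)\},\qquad
T^{\perp_{>0}}:=\{X\in\T\mid \Hom_\T(T,X[i])=0\ (i>0)\},
\]
and set $\mathcal{Z}:={}^{\perp_{>0}}T\cap T^{\perp_{>0}}$. Let $[T]$ be the ideal of morphisms factoring through an object of $\mathcal{P}$, and form the additive quotient $\mathcal{Z}/[T]$. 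The heart of the argument is the claim that the composite $\mathcal{Z}\hookrightarrow\T\to\T/\S$ induces an equivalence of additive categories $\mathcal{Z}/[T]\xrightarrow{\ \sim\ }\T/\S$, and that $\mathcal{Z}/[T]$ carries a natural triangulated structure, with a $\mathcal{P}$-relative shift, for which this equivalence is exact.

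Granting this, the forward map is essentially formal. Any $P\in\silt_T\T$ lies in $\mathcal{Z}$: writing $P=T\oplus Q$, the presilting condition $\Hom_\T(P,P[i])=0\ (i>0)$ gives in particular $\Hom_\T(P,T[i])=0=\Hom_\T(T,P[i])$, so $P\in{}^{\perp_{>0}}T\cap T^{\perp_{>0}}$. Since $T\in\S$ its image in $\T/\S$ is zero, so $\overline P=\overline Q$, and exactness of the quotient functor carries the silting object $P$ to a presilting object of $\T/\S$; generation $\thick P=\T$ passes to $\thick\overline P=\T/\S$ because the quotient functor is exact and essentially surjective. This defines a well-defined map $\silt_T\T\to\silt(\T/\S)$.

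For the inverse I would start from a silting object $\overline M$ of $\T/\S$, lift it through the equivalence to an object $M\in\mathcal{Z}$, and set $P:=T\oplus M$. The membership $M\in\mathcal{Z}$ together with the presilting property of $T$ yields the mixed vanishings $\Hom_\T(M,T[i])=0=\Hom_\T(T,M[i])$ and $\Hom_\T(T,T[i])=0$ for $i>0$; the remaining groups $\Hom_\T(M,M[i])$ vanish for $i>0$ because $\overline M$ is presilting in $\T/\S\simeq\mathcal{Z}/[T]$ and, by construction of $\mathcal{Z}$, the $\mathcal{P}$-relative shift on $\mathcal{Z}$ agrees with $[1]$ in the relevant positive degrees. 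Hence $\Hom_\T(P,P[i])=0$ for all $i>0$, and $\thick\overline M=\T/\S$ together with $\S=\thick T\subseteq\thick P$ forces $\thick P=\T$ by the standard localization fact. Thus $P\in\silt_T\T$ maps to $\overline M$, giving surjectivity, while faithfulness of the equivalence gives injectivity.

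The genuine obstacle is establishing the equivalence $\mathcal{Z}/[T]\simeq\T/\S$, and this is precisely where the boundedness hypothesis $\Hom_\T(T,X[\ell])=0=\Hom_\T(X,T[\ell])\ (\ell\gg0)$ is used. Essential surjectivity requires replacing an arbitrary $X\in\T$, up to isomorphism in $\T/\S$, by an object of $\mathcal{Z}$: I would do this by successively forming triangles against $T$ to annihilate the nonzero groups $\Hom_\T(T,X[i])$ and $\Hom_\T(X,T[i])$ with $i>0$, the boundedness guaranteeing that only finitely many such groups occur so that the truncation terminates. Fullness and faithfulness reduce to a roof calculus showing that every morphism $\overline X\to\overline Y$ between objects of $\mathcal{Z}$ is represented by an honest morphism of $\T$, unique modulo $[T]$; once more boundedness is exactly what makes the obstructing connecting maps vanish so that roofs straighten. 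Verifying that $\mathcal{Z}/[T]$ is triangulated, with its relative shift and induced triangles, is the final technical ingredient, after which the transport of silting objects carried out above is purely formal.
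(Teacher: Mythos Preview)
The paper does not prove this statement at all: it is quoted verbatim as \cite[Theorem 3.7]{IY} (Iyama--Yang, silting reduction), so there is no ``paper's own proof'' to compare against. Your proposal is, in outline, precisely the Iyama--Yang argument: introduce $\mathcal{Z}={}^{\perp_{>0}}T\cap T^{\perp_{>0}}$, equip the ideal quotient $\mathcal{Z}/[T]$ with a relative-shift triangulated structure, prove that $\mathcal{Z}/[T]\to\T/\S$ is a triangle equivalence using the boundedness hypothesis for essential surjectivity, and then transport silting objects across. So at the level of strategy you are on target.

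That said, a couple of the steps you declare ``formal'' hide exactly the technical content of \cite{IY}. First, the claim that ``the $\mathcal{P}$-relative shift on $\mathcal{Z}$ agrees with $[1]$ in the relevant positive degrees'' is not automatic: one must check, via the approximation triangles $M\to T_M\to M\langle 1\rangle\to M[1]$, that $\Hom_\T(M,N[i])\simeq\Hom_{\mathcal{Z}/[T]}(M,N\langle i\rangle)$ for $i>0$ and $M,N\in\mathcal{Z}$; this is where the left- and right-orthogonality conditions on $\mathcal{Z}$ are really used, and without it your vanishing of $\Hom_\T(M,M[i])$ is unjustified. Second, ``faithfulness of the equivalence gives injectivity'' needs the Krull--Schmidt property of $\T$ together with the observation that a basic $P=T\oplus M\in\silt_T\T$ has $M$ sharing no summand with $T$, so that $M$ is recovered from $\overline P$ up to isomorphism in $\T$ (not just in $\mathcal{Z}/[T]$). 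Finally, the generation step ``$\thick\overline M=\T/\S$ and $\S\subseteq\thick P$ force $\thick P=\T$'' is correct, but it uses that the preimage of a thick subcategory under a Verdier quotient is thick and contains $\S$; you should say this rather than invoke an unnamed ``standard localization fact''. None of these is a fatal gap, but each is a place where an examiner would ask you to supply the actual argument from \cite{IY} rather than a gesture toward it.
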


For example, this is the case where $\T$ has a silting object \cite[Proposition 2.4]{AI}.

Now, we are ready to show our main theorem of this section.

\begin{proof}[Proof of Theorem \ref{ns}]
We will apply silting reduction to $\T=\Db(\mod \Lambda)$ and $T=\Lambda$; in this setting, $\S=\thick \Lambda=\Kb(\proj\Lambda)$ and $\T/\S=\ds(\Lambda)$.
To do that, we check that the conditions $\Hom_\T(\Lambda, X[\ell])=0=\Hom_\T(X, \Lambda[\ell])$ are satisfied.
The first equality holds evidently.
Let us show that the second equality holds true.
Since $\Lambda$ has finite right selfinjective dimension,
it can be regarded as a complex in $\Kb(\inj\Lambda)$, which is obtained by applying the Nakayama functor $\nu:=-\otimes_\Lambda^{\bf L}D\Lambda$ to some complex $P$ in $\Kb(\proj\Lambda)$.
Then we get isomorphisms
\[\Hom_\T(X, \Lambda[\ell])\simeq \Hom_{\Kb(\mod\Lambda)}(X, \nu P[\ell])\simeq D\Hom_{\Kb(\mod\Lambda)}(P[\ell], X).\]
As the complex $X$ is bounded, the last above is zero for enoughly large $\ell$.
Thus, silting reduction brings us a bijection $\silt_\Lambda\Db(\mod\Lambda)\to\silt\ds(\Lambda)$.
It follows from \cite[Example 2.5(1)]{AI} that the LHS of the bijection is $\{\Lambda\}$ if $\Lambda$ has finite global dimension, otherwise empty.
Hence, we conclude that $\ds(\Lambda)$ admits no non-zero silting object.
\end{proof}

An algebra $\Lambda$ is said to be \emph{Iwanaga--Gorenstein} if it has finite right and left selfinjective dimension.
In the case, the singularity category $\ds(\Lambda)$ is triangle equivalent to the stable category $\sCM\Lambda$ of the full subcategory of $\mod\Lambda$ consisting of Cohen--Macaulay modules $M$; i.e. $\Ext_\Lambda^i(M,\Lambda)=0$ for $i>0$.
So, we immediately obtain the following corollary.

\begin{corollary}\label{IG}
$\sCM\Lambda$ has no non-zero silting object if $\Lambda$ is Iwanaga--Gorenstein.
\end{corollary}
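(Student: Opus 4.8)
The plan is to deduce Corollary~\ref{IG} directly from Theorem~\ref{ns} together with the standard equivalence $\ds(\Lambda)\simeq\sCM\Lambda$ for Iwanaga--Gorenstein algebras. First I would recall that an Iwanaga--Gorenstein algebra has, by definition, finite right selfinjective dimension, so Theorem~\ref{ns} immediately applies and tells us that $\ds(\Lambda)$ has no non-zero silting object. Second, I would invoke Buchweitz's theorem: when $\Lambda$ is Iwanaga--Gorenstein, the composite $\CM\Lambda\hookrightarrow\mod\Lambda\to\ds(\Lambda)$ induces a triangle equivalence $\sCM\Lambda\xrightarrow{\ \sim\ }\ds(\Lambda)$. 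Since the property of having no non-zero silting object is invariant under triangle equivalence (a silting object is characterised purely in terms of vanishing of graded Hom and generation of the thick subcategory), transporting the conclusion of Theorem~\ref{ns} across this equivalence yields that $\sCM\Lambda$ has no non-zero silting object.

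The only point requiring a word of care is that Theorem~\ref{ns} is stated under the one-sided hypothesis $\id\Lambda_\Lambda<\infty$, whereas the Buchweitz equivalence $\ds(\Lambda)\simeq\sCM\Lambda$ is classically stated for two-sided Iwanaga--Gorenstein algebras; this is exactly why the corollary is phrased with the two-sided condition, so there is no gap. I would therefore present the proof as: the two-sided hypothesis is in particular a right-sided one, so Theorem~\ref{ns} gives the claim for $\ds(\Lambda)$; the two-sided hypothesis also licenses the equivalence with $\sCM\Lambda$; combine the two.

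No serious obstacle is anticipated --- this is a one-line corollary of Theorem~\ref{ns} --- but if one wanted to be self-contained the mildly technical ingredient to spell out is the triangle equivalence $\ds(\Lambda)\simeq\sCM\Lambda$, whose proof uses that over an Iwanaga--Gorenstein ring every module has finite Gorenstein dimension, so that the Cohen--Macaulay approximation realises each object of $\ds(\Lambda)$ by a genuine Cohen--Macaulay module and the stable Hom on $\CM\Lambda$ agrees with the Hom in the Verdier quotient. I would simply cite this (e.g.\ via \cite{AI} or Buchweitz) rather than reproving it, and keep the proof of the corollary to the two sentences above.
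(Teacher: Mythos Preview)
Your proposal is correct and matches the paper's own argument essentially verbatim: the paper notes that an Iwanaga--Gorenstein algebra has finite right selfinjective dimension (so Theorem~\ref{ns} applies to $\ds(\Lambda)$) and that in this case $\ds(\Lambda)\simeq\sCM\Lambda$ as triangulated categories, whence the corollary follows immediately. Your remark about why the two-sided hypothesis is needed (to invoke the Buchweitz equivalence) is a helpful clarification, but the overall strategy is identical.
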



\section{The finiteness of support $\tau$-tilting modules}\label{tubular}

Let $\Lambda$ be an algebra.
We say that an object $X$ of $\Kb(\proj\Lambda)$ is \emph{two-term} if the $i$th term of $X$ is zero unless $i=0, -1$.
A \emph{support $\tau$-tilting} module is defined to be the 0th cohomology of a two-term silting object of $\Kb(\proj\Lambda)$. (See \cite{AIR} for details.)
Denote by $\sttilt\Lambda$ the set of isomorphism classes of basic support $\tau$-tilting modules.
We call $\Lambda$ \emph{$\tau$-tilting-finite} if $\sttilt\Lambda$ is a finite set.

In this section, we discuss $\tau$-tilting-finiteness of weakly symmetric algebras of tubular type with non-singular Cartan matrix,
which were completely classified up to Morita equivalence by \cite{BS1} as follow (Figure \ref{Lwst}).

\begin{figure}[ht]
\begin{tabular}{cccc}
 $\begin{array}{c}
A_1(\lambda):\\
{\text {\tiny $(\lambda\in K\setminus\{0,1\})$}}\\
\quad \\
\begin{xy}
(0,0) *[o]+{1}="A", (12,0)*[o]+{2}="B", (24,0)*[o]+{3}="C",
\ar @<2pt> "A";"B"^{\alpha}
\ar @<2pt> "B";"A"^{\gamma}
\ar @<2pt> "B";"C"^{\sigma }
\ar @<2pt> "C";"B"^{\beta}
\end{xy} \\
{\tiny \begin{array}{c}
\alpha\gamma\alpha=\alpha\sigma\beta\\
\beta\gamma\alpha=\lambda\beta\sigma\beta\\
  \gamma\alpha\gamma=\sigma\beta\gamma \\
 \gamma\alpha\sigma=\lambda\sigma\beta\sigma \\
\end{array}}
\end{array}
$ & $\begin{array}{c}
A_2(\lambda):\\
{\text {\tiny $(\lambda\in K\setminus\{0,1\})$}}\\
\quad \\
\begin{xy}
(0,0) *[o]+{1}="A", (10,0)*[o]+ {2}="B",
\ar @(ld,lu) "A";"A"^{\alpha}
\ar @(rd,ru) "B";"B"_{\beta}
\ar @<2pt> "A";"B"^{\sigma}
\ar @<2pt> "B";"A"^{\gamma}
\end{xy} \\
\tiny{ \begin{array}{c}
  \alpha^2=\sigma\gamma \\
 \lambda\beta^2=\gamma\sigma \\
 \gamma\alpha=\beta\gamma \\
 \sigma\beta=\alpha\sigma \\
\end{array}}
\end{array}
$ & $ \begin{array}{c}
A_3: \\
\begin{xy}
(-3,-5)*[o]+{1}="A", (6,0)*[o]+{2}="B", (6,9)*[o]+{3}="C", (15,-5)*[o]+{4}="D",
\ar @<2pt>"A";"B"^{\alpha}
\ar @<2pt>"B";"A"^{\beta}
\ar @<2pt>"B";"C"^{\delta}
\ar @<2pt>"C";"B"^{\gamma}
\ar @<2pt>"B";"D"^{\epsilon}
\ar @<2pt>"D";"B"^{\xi}
\end{xy} \\
{\tiny \begin{array}{c}
  \beta\alpha+\delta\gamma+\epsilon\xi=0 \\
  \alpha\beta=0 \\
  \gamma\delta=0 \\
 \xi\epsilon=0 \\
 \end{array}}
 \end{array}
$ & $\begin{array}{c}
A_4: \\
\begin{xy}
(-3,-5)*[o]+{1}="A", (6,0)*[o]+{2}="B", (6,9)*[o]+{3}="C", (15,-5)*[o]+{4}="D",
\ar @<2pt>"A";"B"^{\alpha}
\ar @<2pt>"B";"A"^{\beta}
\ar @<2pt>"B";"C"^{\delta}
\ar @<2pt>"C";"B"^{\gamma}
\ar @<2pt>"B";"D"^{\epsilon}
\ar @<2pt>"D";"B"^{\xi}
\end{xy} \\
{\tiny \begin{array}{c}
  \beta\alpha+\delta\gamma+\epsilon\xi=0 \\
  \alpha\beta=0 \\
  \gamma\epsilon=0 \\
 \xi\delta=0 \\
 \end{array}}
 \end{array}
$
\\
\quad \\
$ \begin{array}{c}
A_5: \\
\begin{xy}
(0,0) *[o]+{1}="A", (10,0) *[o]+{2}="B",
\ar @(ld,lu) "A";"A"^{\alpha}
\ar @<2pt> "A";"B"^{\gamma}
\ar @<2pt> "B";"A"^{\beta}
\end{xy} \\
 \tiny{ \begin{array}{c}
  \alpha^2=\gamma\beta \\
 \beta\alpha\gamma=0 \\
 \quad \\
 \quad  \quad \\
 \quad \quad \\
 \quad \quad \\
 \quad \quad \\
 \quad \quad \\

\end{array}}
\end{array}
$ & $
\begin{array}{c}
A_6: \\
\begin{xy}
(0,0) *[o]+{1}="A", (10,0) *[o]+{2}="B",
\ar @(ld,lu) "A";"A"^{\alpha}
\ar @<2pt> "A";"B"^{\gamma}
\ar @<2pt> "B";"A"^{\beta}
\end{xy} \\
 \tiny{ \begin{array}{c}
  \alpha^3=\gamma\beta \\
 \beta\gamma=0 \\
 \beta\alpha^2=0 \\
 \alpha^2\gamma=0 \\
  \quad \\
 \quad \quad \\
 \quad \quad \\
 \quad \quad \\
\end{array}}
\end{array}
$ & $
\begin{array}{c}
A_7: \\
\begin{xy}
(0,0) *[o]+{1}="A", (10,0) *[o]+{2}="B", (20,0) *[o]+{3}="C", (30,0) *[o]+{4}="D",
\ar @<2pt> "A";"B"^{\alpha}
\ar @<2pt> "B";"A"^{\beta}
\ar @<2pt> "B";"C"^{\delta}
\ar @<2pt> "C";"B"^{\gamma}
\ar @<2pt> "C";"D"^{\epsilon}
\ar @<2pt> "D";"C"^{\xi}
\end{xy} \\
{\tiny \begin{array}{c}
  \beta\alpha=\delta\gamma \\
 \gamma\delta=\epsilon\xi \\
 \alpha\delta\epsilon=0 \\
 \zeta\gamma\beta=0 \\
  \quad \\
 \quad \quad \\
 \quad \quad \\
 \quad \quad \\
\end{array}}
\end{array}
$ & $
\begin{array}{c}
A_8: \\
\begin{xy}
(0,5) *[o]+{1}="A", (0,-5) *[o]+{2}="B", (10,-5) *[o]+{3}="C", (10,5) *[o]+{4}="D",
\ar "A";"B"_{\sigma}
\ar "B";"C"_{\xi}
\ar "C";"D"_{\gamma}
\ar "D";"A"_{\delta}
\ar @<2pt> "A";"C"^{\alpha}
\ar @<2pt> "C";"A"^{\beta}
\end{xy} \\
{\tiny \begin{array}{c}
  \alpha\beta\alpha=\sigma\xi \\
  \beta\alpha\beta=\gamma\delta\\
  \xi\beta\alpha= \delta\alpha\beta=0\\
 \beta\alpha\gamma=\alpha\beta\sigma=0 \\
 \xi\gamma=\delta\sigma=0 \\
\end{array}}
\end{array} $
\quad \\
$ \begin{array}{c}
A_9: \\
\begin{xy}
(0,5) *[o]+{1}="A", (0,-5) *[o]+{2}="B", (10,-5) *[o]+{3}="C", (10,5) *[o]+{4}="D",
\ar "A";"B"_{\alpha}
\ar @<-2pt>"B";"C"_{\sigma}
\ar @<-2pt>"C";"B"_{\beta}
\ar @<-2pt> "C";"D"_{\gamma}
\ar @<-2pt> "D";"C"_{\epsilon}
\ar "D";"A"_{\delta}
\end{xy} \\
{\tiny \begin{array}{c}
  \delta\alpha=\epsilon\beta \\
  \gamma\epsilon=\beta\sigma\\
 \alpha\sigma\beta=0 \\
 \epsilon\gamma\delta=0 \\
 \sigma\gamma\epsilon\gamma=0
 \end{array}}
 \end{array}
$ & $
\begin{array}{c}
A_{10}: \\
\begin{xy}
(8,10) *[o]+{1}="A", (8,1) *[o]+{2}="B", (16,-5) *[o]+{3}="C", (0,-5) *[o]+{4}="D",
\ar @<2pt>"A";"B"^{\beta}
\ar @<2pt>"B";"A"^{\alpha}
\ar "B";"C"^{\delta}
\ar "C";"D"^{\gamma}
\ar "D";"B"^{\xi}
\end{xy} \\
{\tiny \begin{array}{c}
  \xi\alpha\beta=\xi\delta\gamma\xi \\
  \alpha\beta\delta=\delta\gamma\xi\delta\\
 \beta\alpha=0 \\
 (\gamma\xi\delta)^2\gamma=0 \\
\end{array}}
\end{array}
$ & $
\begin{array}{c}
A_{11}: \\
\begin{xy}
(0,0) *[o]+{1}="A", (10,0) *[o]+{2}="B", (20,0) *[o]+{3}="C", (30,0) *[o]+{4}="D",
\ar @<2pt> "A";"B"^{\beta}
\ar @<2pt> "B";"A"^{\alpha}
\ar @<2pt> "B";"C"^{\xi }
\ar @<2pt> "C";"B"^{\gamma}
\ar @<2pt> "C";"D"^{\zeta}
\ar @<2pt> "D";"C"^{\delta}
\end{xy} \\
{\tiny \begin{array}{c}
  \gamma\alpha\beta=\gamma\xi\gamma \\
 \alpha\beta\xi=\xi\gamma\xi \\
 \beta\alpha=0 \\
 \delta\gamma=0 \\
 \xi\zeta=0\\
 (\gamma\xi)^2=\zeta\delta
\end{array}}
\end{array}
$ & $
\begin{array}{c}
A_{12}: \\
\begin{xy}
(0,-5)*[o]+{1}="A",(8,7)*[o]+{2}="B", (16,-5)*[o]+{3}="C",
\ar "A";"B"^{\alpha}
\ar "B";"C"^{\gamma}
\ar @<2pt>"C";"A"^{\beta}
\ar @<2pt>"A";"C"^{\delta}
\end{xy} \\
{\tiny \begin{array}{c}
\delta\beta\delta=\alpha\gamma \\
\gamma\beta\alpha=0 \\
\beta(\delta\beta)^3=0\\
\end{array}}
\end{array}
$
\\
\quad \\
$ \begin{array}{c}
A_{13}: \\
\begin{xy}
(0,0) *[o]+{1}="A", (10,0) *[o]+{2}="B", (20,0) *[o]+{3}="C",
\ar @(ul,ur) "B";"B"^{\alpha}
\ar @<2pt> "A";"B"^{\beta}
\ar @<2pt> "B";"A"^{\gamma}
\ar @<2pt> "B";"C"^{\delta }
\ar @<2pt> "C";"B"^{\sigma}
\end{xy} \\
{\tiny \begin{array}{c}
\alpha^2=\gamma\beta\\
\beta\delta = \beta\gamma=0 \\
 \sigma\gamma=\sigma\alpha=0 \\
 \alpha\delta=0 \\
 \alpha^3=\delta\sigma \\
  \quad \\
\end{array}
}\end{array}
$ & $
\begin{array}{c}
A_{14}: \\
\begin{xy}
(0,0) *[o]+{1}="A", (10,0) *[o]+{2}="B", (20,0) *[o]+{3}="C",
\ar @<2pt> "A";"B"^{\alpha}
\ar @<2pt> "B";"A"^{\beta}
\ar @<2pt> "B";"C"^{\delta }
\ar @<2pt> "C";"B"^{\gamma}
\end{xy} \\
{\tiny \begin{array}{c}
 \quad \\
\beta\alpha=(\delta\gamma)^2 \\
\alpha\delta\gamma\delta=0 \\
  \gamma\delta\gamma\beta=0 \\
 \alpha\beta=0 \\
 \quad \\
 \quad\\
 \end{array}
}\end{array}
$ & $
\begin{array}{c}
A_{15}: \\
\begin{xy}
(0,-5) *[o]+{1}="A",(8,7) *[o]+{2}="B", (16,-5) *[o]+{3}="C",
\ar @(lu,ld) "A";"A"_{\alpha}
\ar "A";"B"^{\sigma}
\ar "B";"C"^{\gamma}
\ar @<2pt>"C";"A"^{\beta}
\ar @<2pt>"A";"C"^{\delta}
\end{xy} \\
{\tiny \begin{array}{c}
\gamma\beta\alpha=0 \\
\alpha^2=\delta\beta \\
\beta\delta=\alpha\sigma=0\\
\alpha\delta=\sigma\gamma\\
\end{array}
}  \end{array}
$ & $
\begin{array}{c}
A_{16}: \\
\begin{xy}
(0,-5) *[o]+{1}="A",(8,7) *[o]+{2}="B", (16,-5) *[o]+{3}="C",
\ar @(lu,ld) "A";"A"_{\alpha}
\ar "B";"A"_{\sigma}
\ar "C";"B"_{\gamma}
\ar @<-2pt>"A";"C"_{\beta}
\ar @<-2pt>"C";"A"_{\delta }
\end{xy} \\
{\tiny \begin{array}{c}
\alpha\beta\gamma=0 \\
\alpha^2=\beta\delta \\
\delta\beta=\sigma\alpha=0\\
\delta\alpha=\gamma\sigma\\
\end{array}
}\end{array} $
\end{tabular}
\caption{List of weakly symmetric algebras of tubular type}
\label{Lwst}
\end{figure}

The main theorem of this section is the following.

\begin{theorem}\label{finiteness1}
Any weakly symmetric algebra of tubular type with non-singular Cartan matrix is $\tau$-tilting-finite.
\end{theorem}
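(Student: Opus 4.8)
The plan is to combine the finite classification in Figure~\ref{Lwst} with the well-known tool of reduction techniques for $\tau$-tilting-finiteness, rather than to attack each of the sixteen algebras $A_i$ separately by brute force. The starting point is that $\tau$-tilting-finiteness is invariant under Morita equivalence, so it suffices to prove that each of the finitely many algebras $A_1(\lambda),\dots,A_{16}$ listed in Figure~\ref{Lwst} is $\tau$-tilting-finite. Since there are only finitely many of them (the two one-parameter families $A_1(\lambda)$ and $A_2(\lambda)$ being handled uniformly in the parameter, as the relations depend on $\lambda$ only through scalars that do not affect the lattice of support $\tau$-tilting modules), a case-by-case analysis is in principle sufficient and will also yield the explicit counts claimed in Theorem~\ref{finiteness1}'s companion statement.

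Concretely, first I would invoke the standard reduction: if $e$ is an idempotent of $\Lambda$, then $\tau$-tilting-finiteness of $\Lambda$ follows once one controls $\Lambda/\Lambda e\Lambda$ and $e\Lambda e$; more usefully here, a $\tau$-tilting-finite algebra has $\tau$-tilting-finite quotients, and $\Lambda$ is $\tau$-tilting-finite iff the lattice of torsion classes is finite. So the second step is, for each $A_i$, to exhibit a finite lattice. The most efficient route is to observe that all the $A_i$ are special biserial (indeed the relations in Figure~\ref{Lwst} are monomial up to the ``Cartan'' relations of the form $\alpha^2=\gamma\beta$ etc., making these algebras special biserial or closely related), so one can enumerate the indecomposable two-term presilting complexes directly from strings and bands, or equivalently enumerate the bricks. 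A finite-dimensional algebra is $\tau$-tilting-finite iff it has only finitely many bricks, and for string-like algebras the bricks are governed by strings; band modules are never bricks, so it remains to check that only finitely many string modules are bricks. This is the computational heart of the argument and the step I expect to be the main obstacle: the algebras $A_3$, $A_4$, $A_9$, $A_{10}$, $A_{11}$ have four vertices and several two-cycles, so the string combinatorics is genuinely involved, and one must verify there is no ``escaping'' infinite family of brick strings.

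For the algebras that are not literally special biserial (those with a relation like $\alpha^3 = \gamma\beta$, where $\alpha^3$ is not zero), I would instead pass to an associated graded or a socle-deformation argument: these algebras are weakly symmetric of tubular type, hence by \cite{BS1} their derived or stable structure is that of a well-understood tubular algebra, and tubular algebras are tame of polynomial growth. The key general principle is that tame algebras of \emph{polynomial growth} are $\tau$-tilting-finite is \emph{not} true in general, so one cannot simply cite tameness; instead the right statement is that these specific algebras have a finite silting/tilting theory — which is exactly what part (3) of the companion theorem asserts (tilting-discreteness). So the cleanest logical order is: (i) prove each $A_i$ is tilting-discrete, using that tilting-discrete algebras are $\tau$-tilting-finite \cite{AI} — a tilting-discrete symmetric algebra is $\tau$-tilting-finite because two-term silting complexes are obtained by iterated irreducible mutation from $\Lambda$ and the whole silting quiver is then connected and locally finite with finitely many vertices reachable. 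Establishing tilting-discreteness is itself done by checking that $\Lambda$ admits no ``infinite'' mutation sequence, which for weakly symmetric algebras of tubular type reduces to an analysis of the derived equivalence class and the action of mutation on it. The hard part, in every approach, is the finiteness of the relevant combinatorial data for the four-vertex algebras; once that is in hand, the explicit numbers follow by direct enumeration, and $\tau$-tilting-finiteness is then immediate.
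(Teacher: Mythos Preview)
Your proposal misses the key tool that makes the paper's proof essentially two lines: the criterion of Eisele--Janssens--Raedschelders \cite[Theorem~13]{EJR} that a \emph{symmetric} algebra whose Cartan matrix is positive definite is automatically $\tau$-tilting-finite. The paper simply notes that each $A_i$ with $i\neq 3$ is symmetric (by \cite[Theorem~2]{BS1}), checks that its Cartan matrix is positive definite, and applies \cite{EJR}; the remaining algebra $A_3$ is the preprojective algebra of type $\mathbb{D}_4$, handled by \cite{M}. No string combinatorics, brick enumeration, or mutation analysis is needed.

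By contrast, your sketch contains several claims that are either false or would not lead anywhere. First, the $A_i$ are \emph{not} all special biserial: for instance $A_3$ and $A_4$ have three pairs of arrows meeting at the vertex $2$, violating the at-most-two condition, and several others (e.g.\ $A_{13}$, $A_{15}$) have a loop together with two further arrows at a vertex. Second, the assertion that ``band modules are never bricks'' is simply wrong---already for the Kronecker algebra the simple regular modules are bricks arising from bands---so the proposed reduction to counting string bricks is not valid even for the genuinely special biserial cases. Third, your suggested route via tilting-discreteness runs the logic backwards: in the paper (and in \cite{AM}) tilting-discreteness is \emph{deduced from} $\tau$-tilting-finiteness of each algebra in the derived class, not used to prove it. What remains of your plan is an open-ended promise to verify finiteness of bricks case by case, with the four-vertex algebras flagged as obstacles; that is not a proof, and the clean EJR criterion renders all of this unnecessary.
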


In the appendix, we will see the numbers of support $\tau$-tilting modules of $A_i$'s.

\begin{proof}
Note that $A_i$ but $i=3$ is symmetric \cite[Theorem 2]{BS1}.
Observe that the Cartan matrix of $A_i$ has positive definite.
We then apply \cite[Theorem 13]{EJR}
to deduce the conclusion that $A_i$ but $i=3$ is $\tau$-tilting-finite.
The algebra $A_3$ is just the preprojective algebra of Dynkin type $\mathbb{D}_4$, and so it is $\tau$-tilting-finite by \cite[Theorem 2.21]{M}.
\end{proof}

A selfinjective algebra is said to be \emph{tilting-discrete} if for any $n>0$, there are only finitely many tilting objects of length $n$.
Here is a corollary of Theorem \ref{finiteness1}.

\begin{corollary}
Any weakly symmetric algebra of tubular type with non-singular Cartan matrix is tilting-discrete.
\end{corollary}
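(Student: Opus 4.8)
The plan is to deduce this from Theorem~\ref{finiteness1} together with the principle that, for a symmetric algebra, $\tau$-tilting-finiteness already forces silting-discreteness — and silting-discreteness is at least as strong as tilting-discreteness, since the tilting complexes of a given length form a subset of the silting complexes of that length. By Theorem~\ref{finiteness1} every $A_i$ is $\tau$-tilting-finite, and, as recorded in the proof of that theorem, $A_i$ is symmetric for every $i\neq 3$; so once the displayed principle is in hand, only $A_3$ remains.

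For the principle itself I would argue as follows. Via silting reduction (in the spirit of Theorem~\ref{sr}), applied along irreducible silting mutations, silting-discreteness of $\Lambda$ reduces to checking, for every silting complex $T$ of $\Kb(\proj\Lambda)$ reachable from $\Lambda$ by such mutations, that there are only finitely many two-term silting complexes relative to $T$, equivalently that $\End_{\Db(\mod\Lambda)}(T)$ is $\tau$-tilting-finite. When $T$ is a tilting complex, $\End(T)$ is again symmetric and derived equivalent to $\Lambda$; and positive-definiteness of the Cartan matrix persists along derived equivalences of symmetric algebras, because there the Cartan form coincides with the Euler form, which is a derived invariant, so Sylvester's law of inertia keeps the signature fixed and \cite[Theorem 13]{EJR} re-applies to every such $\End(T)$. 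Running this back through the reduction yields silting-discreteness, hence tilting-discreteness, of each $A_i$ with $i\neq 3$.

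The step requiring the most care is making the reduction airtight: one must also control the two-term silting complexes over the endomorphism algebras of the reachable silting complexes that are \emph{not} tilting, and certify that the finiteness really does propagate through the whole mutation process without ever leaving the class governed by \cite{EJR}; this is the crux, whereas the silting-reduction bookkeeping around it is routine. Finally, $A_3$ is the preprojective algebra of Dynkin type $\mathbb{D}_4$, which is tilting-discrete by Aihara and Mizuno's classification of tilting complexes over preprojective algebras of Dynkin type — alternatively one reruns the argument above with ``self-injective'' in place of ``symmetric'', since self-injectivity is likewise a derived invariant. (The same reasoning with Theorem~\ref{finiteness2} in place of Theorem~\ref{finiteness1} proves the analogous statement for the algebras $\Lambda_i$, giving part~(3) of the second main theorem.)
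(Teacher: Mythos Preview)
Your approach is sound but differs from the paper's. The paper's proof is essentially a one-line citation: after noting via \cite{BHS} that any such algebra is derived equivalent to one of the $A_i$'s, it invokes \cite[Corollary~2.11]{AM} to pass directly from the $\tau$-tilting-finiteness established in Theorem~\ref{finiteness1} to tilting-discreteness. You instead argue this passage by hand for the symmetric $A_i$'s, using that ``symmetric with positive-definite Cartan matrix'' is a derived invariant (Cartan form $=$ Euler form on $K_0$), so that \cite[Theorem~13]{EJR} re-applies at every tilting mutation --- a more intrinsic route that avoids both the derived classification of \cite{BHS} and the black-box citation of \cite{AM}. Your self-flagged ``crux'' about non-tilting silting complexes is actually a non-issue here: over a symmetric algebra every basic silting complex in $\Kb(\proj\Lambda)$ is already a tilting complex up to shift (see \cite{A,AI}), so the mutation process never leaves the tilting world and your reduction closes immediately. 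One caveat: the alternative you suggest for $A_3$, rerunning the argument with ``selfinjective'' in place of ``symmetric'', does not go through as stated, since the identification of the Cartan form with the Euler form requires symmetry; your primary move for $A_3$, citing \cite{AM} directly for the preprojective algebra of type $\mathbb{D}_4$, is the correct one.
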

\begin{proof}
A weakly symmetric algebra of tubular type with non-singular Cartan matrix is derived equivalent to one of $A_i$'s \cite{BHS},
which is $\tau$-tilting-finite by Theorem \ref{finiteness1}.
It follows from \cite[Corollary 2.11]{AM} that the algebra is tilting-discrete.
\end{proof}

Thanks to Bialkowski--Skowronski \cite{BS2}, we also have a complete list of Morita equivalence classes of selfinjective algebras which are socle equivalent to selfinjective algebras of tubular type.
We focus on such algebras which are not of tubular type.
The following classes of algebras coincide \cite{S}:
\begin{enumerate}[(i)]
\item selfinjective algebras which are socle equivalent to selfinjective algebras of tubular type but not of tubular type;
\item non-standard selfinjective algebras which are socle equivalent to selfinjective algebras of tubular type;
\item non-standard non-domestic selfinjective algebras of polynomial growth;
\item algebras presented by the quivers and relations as in Figure \ref{LLs}.
\begin{figure}[ht]
\begin{center}
\begin{tabular}{cccc}
$ \begin{array}{c}
\Lambda_1: \\
\begin{xy}
(0,0) *[o]+{1}="A", (10,0) *[o]+{2}="B",
\ar @(ld,lu) "A";"A"^{\alpha}
\ar @<2pt> "A";"B"^{\gamma}
\ar @<2pt> "B";"A"^{\beta}
\end{xy} \\
 \tiny{ \begin{array}{c}
  \alpha^2=\gamma\beta \\
 \beta\alpha\gamma=\beta\alpha^2\gamma \\
 \beta\alpha\gamma\beta=0 \\
 \gamma\beta\alpha\gamma=0 \\
\end{array}}
\end{array}
$
&
$\begin{array}{c}
\Lambda_2: \\
\begin{xy}
(0,0) *[o]+{1}="A", (10,0) *[o]+{2}="B",
\ar @(ld,lu) "A";"A"^{\alpha}
\ar @<2pt> "A";"B"^{\gamma}
\ar @<2pt> "B";"A"^{\beta}
\end{xy} \\
 \tiny{ \begin{array}{c}
  \alpha^2\gamma=\beta\alpha^2=0 \\
\gamma\beta\gamma=\beta\gamma\beta=0\\
 \beta\alpha\gamma=\beta\gamma \\
 \alpha^3=\beta\gamma \\
\end{array}}
\end{array}
$
&
$\begin{array}{c}
\Lambda_3(\lambda): \\
{\text {\tiny $(\lambda\in K\setminus\{0,1\})$}}\\
\quad \\
\begin{xy}
(0,0) *[o]+{1}="A", (10,0) *[o]+{2}="B",
\ar @(ld,lu) "A";"A"^{\alpha}
\ar @<2pt> "A";"B"^{\sigma}
\ar @<2pt> "B";"A"^{\gamma}
\ar @(ru,rd) "B";"B"^{\beta}
\end{xy} \\
 \tiny{ \begin{array}{c}
  \alpha^4=\gamma\alpha^2=\alpha^2\sigma=0 \\
\alpha^2=\sigma\gamma+\alpha^3, \lambda\beta^2=\gamma\sigma\\
\gamma\alpha=\beta\gamma, \sigma\beta=\alpha\sigma \\
\end{array}}
\end{array}
$
&
$
\begin{array}{c}
\Lambda_4: \\
\begin{xy}
(0,-5)*[o]+{1}="A",(8,7)*[o]+{2}="B", (16,-5)*[o]+{3}="C",
\ar "A";"B"^{\alpha}
\ar "B";"C"^{\gamma}
\ar @<2pt>"C";"A"^{\beta}
\ar @<2pt>"A";"C"^{\delta}
\end{xy} \\
{\tiny \begin{array}{c}
\delta\beta\delta=\alpha\gamma, (\beta\delta)^3\beta=0 \\
\gamma\beta\alpha\gamma=\alpha\gamma\beta\alpha=0 \\
\gamma\beta\alpha=\gamma\beta\delta\beta\alpha\\
\end{array}}
\end{array}
$
\\
\quad
\\
$ \begin{array}{c}
\Lambda_5: \\
\begin{xy}
(0,0) *[o]+{1}="A", (10,0) *[o]+{2}="B", (20,0) *[o]+{3}="C",
\ar @(ul,ur) "B";"B"^{\alpha}
\ar @<2pt> "A";"B"^{\beta}
\ar @<2pt> "B";"A"^{\gamma}
\ar @<2pt> "B";"C"^{\delta }
\ar @<2pt> "C";"B"^{\sigma}
\end{xy} \\
{\tiny \begin{array}{c}
\alpha^2=\gamma\beta, \alpha^3=\delta\sigma,\\
\sigma\gamma=\alpha\delta=\sigma\alpha=0 \\
\beta\delta=\sigma\gamma=\sigma\alpha=0 \\
\gamma\beta\gamma=\beta\gamma\beta=0\\
\beta\gamma=\beta\alpha\gamma \\
\end{array}
}\end{array}
$
&
$
\begin{array}{c}
\Lambda_6: \\
\begin{xy}
(0,0) *[o]+{1}="A", (10,0) *[o]+{2}="B", (20,0) *[o]+{3}="C",
\ar @<2pt> "A";"B"^{\alpha}
\ar @<2pt> "B";"A"^{\beta}
\ar @<2pt> "B";"C"^{\delta }
\ar @<2pt> "C";"B"^{\gamma}
\end{xy} \\
{\tiny \begin{array}{c}
 \quad \\
\alpha\delta\gamma\delta=\gamma\delta\gamma\beta=0 \\
\alpha\beta\alpha=\beta\alpha\beta=0 \\
\alpha\beta=\alpha\delta\gamma\beta \\
\beta\alpha=\delta\gamma\delta\gamma \\
 \end{array}
}\end{array}
$
&
$
\begin{array}{c}
\Lambda_7: \\
\begin{xy}
(0,-5) *[o]+{1}="A",(8,7) *[o]+{2}="B", (16,-5) *[o]+{3}="C",
\ar @(lu,ld) "A";"A"_{\alpha}
\ar "A";"B"^{\sigma}
\ar "B";"C"^{\gamma}
\ar @<2pt>"C";"A"^{\beta}
\ar @<2pt>"A";"C"^{\delta}
\end{xy} \\
{\tiny \begin{array}{c}
\beta\delta=\beta\alpha\delta, \alpha\sigma=0, \alpha\delta=\sigma\gamma \\
\gamma\beta\alpha=0, \alpha^2=\delta\beta, \gamma\beta\delta=0 \\
\beta\delta\beta=\delta\beta\delta=0\\
\end{array}
}  \end{array}
$
&
$
\begin{array}{c}
\Lambda_8: \\
\begin{xy}
(0,-5) *[o]+{1}="A",(8,7) *[o]+{2}="B", (16,-5) *[o]+{3}="C",
\ar @(lu,ld) "A";"A"_{\alpha}
\ar "B";"A"_{\sigma}
\ar "C";"B"_{\gamma}
\ar @<-2pt>"A";"C"_{\beta}
\ar @<-2pt>"C";"A"_{\delta }
\end{xy} \\
{\tiny \begin{array}{c}
\delta\beta=\delta\alpha\beta, \sigma\alpha=0, \delta\alpha=\gamma\sigma \\
\alpha\beta\gamma=0, \alpha^2=\beta\delta, \delta\beta\gamma=0\\
\beta\delta\beta=\delta\beta\delta=0\\
\end{array}
}\end{array}
$
\\
\quad
\\
&
$ \begin{array}{c}
\Lambda_9: \\
\begin{xy}
(-3,-5)*[o]+{1}="A", (6,0)*[o]+{2}="B", (6,9)*[o]+{3}="C", (15,-5)*[o]+{4}="D",
\ar @<2pt>"A";"B"^{\alpha}
\ar @<2pt>"B";"A"^{\beta}
\ar @<2pt>"B";"C"^{\delta}
\ar @<2pt>"C";"B"^{\gamma}
\ar @<2pt>"B";"D"^{\epsilon}
\ar @<2pt>"D";"B"^{\xi}
\end{xy} \\
{\tiny \begin{array}{c}
  \beta\alpha+\delta\gamma+\epsilon\xi=0 \\
 \gamma\delta=\xi\epsilon=\alpha\beta\alpha=0 \\
  \beta\alpha\beta=0, \alpha\beta=\alpha\delta\gamma\beta \\
   \end{array}}
 \end{array}
$
&
$ \begin{array}{c}
\Lambda_{10}: \\
\begin{xy}
(0,0)*[o]+{1}="A", (15,9)*[o]+{2}="B", (15,0)*[o]+{3}="C",(15,-9)*[o]+{4}="D", (30,0)*[o]+{5}="E",
\ar @<2pt>"A";"B"^{\eta}
\ar @<2pt>"B";"E"^{\mu}
\ar @<-2pt>"A";"C"_{\xi}
\ar @<-2pt>"C";"A"_{\gamma}
\ar @<-2pt>"C";"E"_{\sigma}
\ar @<-2pt>"E";"C"_{\delta}
\ar @<2pt>"E";"D"^{\beta}
\ar @<2pt>"D";"A"^{\alpha}
\end{xy} \\
{\tiny \begin{array}{c}
 \mu\beta=0, \alpha\eta=0, \beta\alpha=\delta\gamma \\
\xi\sigma=\eta\mu, \sigma\delta=\gamma\xi+\sigma\delta\sigma\delta \\
\delta\sigma\delta\sigma=\xi\gamma\xi\gamma=0\\
   \end{array}}
 \end{array}
$ &
\end{tabular}
\end{center}
\caption{List of $\Lambda_i$'s}
\label{LLs}
\end{figure}
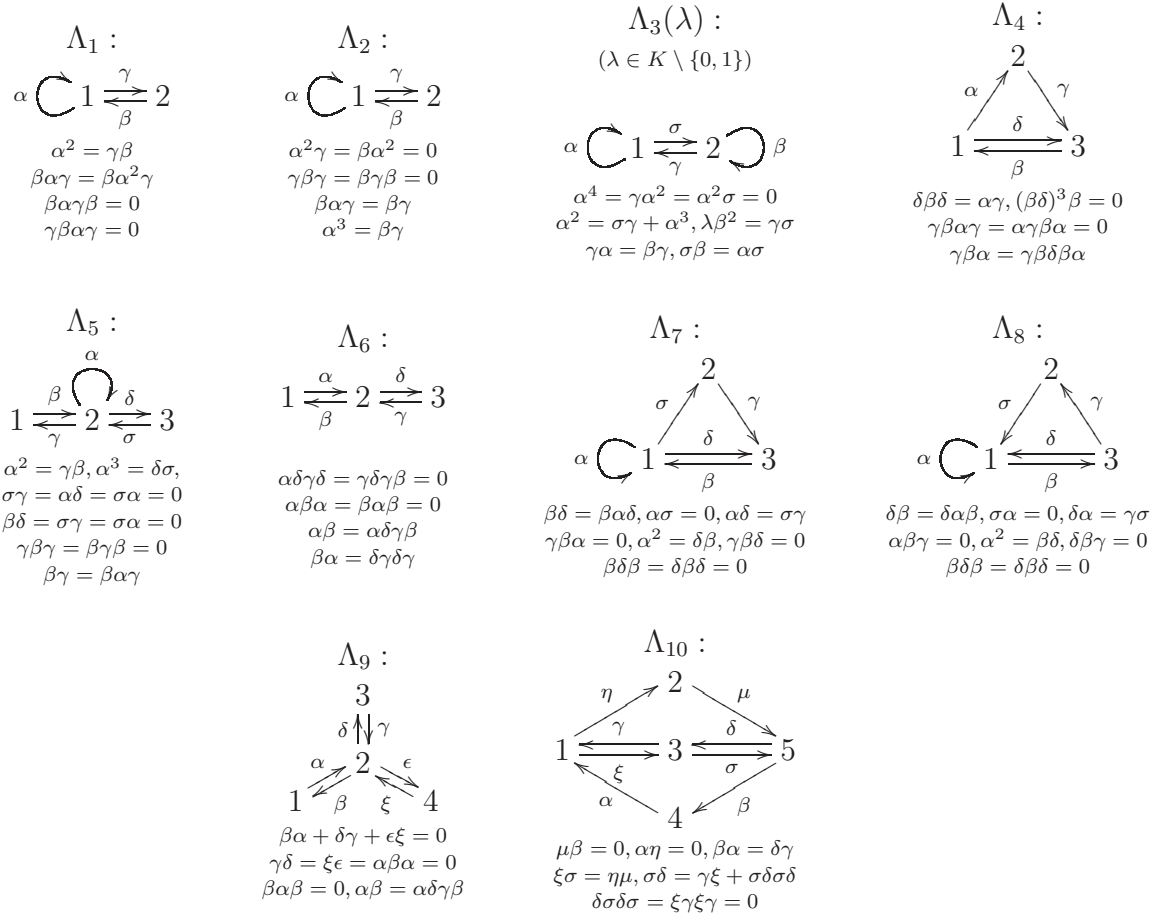
\end{enumerate}
Then we have a similar result as Theorem \ref{finiteness1}.

\begin{theorem}\label{finiteness2}
All $\Lambda_i$ are $\tau$-tilting-finite.
Moreover, they are tilting-discrete.
\end{theorem}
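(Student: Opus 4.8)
The plan is to reduce each $\Lambda_i$ to an algebra whose support $\tau$-tilting theory is already understood. By the classification recalled above (\cite{BS2, S}), every $\Lambda_i$ is socle equivalent to a standard selfinjective algebra of tubular type, and by \cite{BHS} it is moreover derived equivalent to such an algebra. For $i=1,\dots,9$ this companion is weakly symmetric with non-singular Cartan matrix, hence Morita equivalent to exactly one of the algebras $A_j$ of Figure \ref{Lwst}: concretely $\Lambda_1 \leftrightarrow A_5$, $\Lambda_2 \leftrightarrow A_6$, $\Lambda_3(\lambda) \leftrightarrow A_2(\lambda)$, $\Lambda_4 \leftrightarrow A_{12}$, $\Lambda_5 \leftrightarrow A_{13}$, $\Lambda_6 \leftrightarrow A_{14}$, $\Lambda_7 \leftrightarrow A_{15}$, $\Lambda_8 \leftrightarrow A_{16}$ and $\Lambda_9 \leftrightarrow A_3$, in each case $\Lambda_i$ being obtained from $A_j$ by deforming the relations inside the socle; this is also why the corresponding entries of the two tables in the second main theorem agree. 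The remaining algebra $\Lambda_{10}$ has five simple modules, so it cannot be derived equivalent to any $A_j$, and its standard companion is a selfinjective algebra of tubular type whose Cartan matrix is singular.

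For the finiteness statement the key tool is the reduction theorem of \cite{EJR}: quotienting a finite dimensional algebra by a suitable ideal contained in its socle induces a bijection on support $\tau$-tilting modules, and for two socle-equivalent algebras one may choose such ideals on both sides with isomorphic quotients. Applying this to $\Lambda_i$ and to its standard companion yields $\sttilt\Lambda_i \cong \sttilt A_j$ for $i=1,\dots,9$, with $A_j$ as above, and the right-hand side is finite by Theorem \ref{finiteness1}. For $\Lambda_{10}$ the companion has singular Cartan matrix, so \cite[Theorem 13]{EJR} does not apply; here one must instead argue that every selfinjective algebra of tubular type is $\tau$-tilting-finite, or equivalently settle $\Lambda_{10}$ by a direct inspection of its two-term silting complexes, which also produces the lower bound $500$ recorded in the table.

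For tilting-discreteness the argument is exactly that of the Corollary following Theorem \ref{finiteness1}: each $\Lambda_i$ is derived equivalent (\cite{BHS}) to a $\tau$-tilting-finite selfinjective algebra --- one of the $A_j$ for $i\le 9$, the tubular companion for $i=10$ --- and then \cite[Corollary 2.11]{AM} gives that $\Lambda_i$ is tilting-discrete. I expect the main obstacle to be $\Lambda_{10}$: since its tubular companion has singular Cartan matrix it lies outside the reach of \cite[Theorem 13]{EJR}, so its $\tau$-tilting-finiteness has to be obtained either from a separate finiteness result for selfinjective algebras of tubular type or from an explicit, and unavoidably lengthy, computation of two-term silting complexes. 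A secondary technical point needing care is the verification, for each $\Lambda_i$, that the hypotheses of the reduction theorem of \cite{EJR} are genuinely met by the relevant socle ideals.
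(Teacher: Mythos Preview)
Your overall strategy---reduce each $\Lambda_i$ to an $A_j$ via \cite[Theorem~11]{EJR} and then apply Theorem~\ref{finiteness1}---matches the paper's. The paper carries this out by listing, for each $i\neq 8,10$, explicit elements of $Z(\Lambda_i)\cap\rad\Lambda_i$, forming the ideal $I_i$ they generate together with the socle, and verifying by hand that $\overline{\Lambda_i}:=\Lambda_i/I_i\simeq\overline{A_j}$ for a specific $j$ (in particular $\overline{\Lambda_2}\simeq\overline{A_5}$, not $\overline{A_6}$, though these coincide); the case $\Lambda_8\simeq\Lambda_7^{\op}$ goes by duality, and $\Lambda_{10}$ is deferred to separate work, exactly as you anticipate.

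Two steps in your version are genuinely faulty, however. First, you transport $\sttilt$ via socle equivalence, but Theorem~\ref{reduction} requires the ideal to lie in the \emph{center}, and the socle of a selfinjective (even weakly symmetric) algebra need not be central; this is precisely why the paper does not simply divide by $\soc\Lambda_i$ but instead produces bespoke central elements---for instance $\alpha+\beta\in\Lambda_3(\lambda)$, which is not in the socle at all. What you flag as a ``secondary technical point'' is in fact the entire content of the computation. Second, your assertion that $\Lambda_i$ is derived equivalent to some $A_j$ via \cite{BHS} is incorrect: \cite{BHS} treats only the weakly symmetric tubular algebras, and the non-standard $\Lambda_i$'s are not derived equivalent to their standard companions (this is part of what ``non-standard'' encodes). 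Your tilting-discreteness argument therefore collapses. The correct route stays inside the non-standard class: any algebra derived equivalent to some $\Lambda_i$ is again non-standard non-domestic selfinjective of polynomial growth, hence Morita equivalent to some $\Lambda_j$ by the classification recalled above; once all $\Lambda_j$ are shown $\tau$-tilting-finite, \cite[Corollary~2.11]{AM} gives tilting-discreteness.
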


The proof of this theorem is by direct calculation,
and we will give the numbers of support $\tau$-tilting modules of $\Lambda_i$ in the appendix.
So, we now leave here.


\section{Representation-finiteness vs. $\tau$-tilting-finiteness}

The aim of this section is to provide several classes of algebras whose $\tau$-tilting-finiteness implies representation-finiteness.
We start with the following proposition, which was given in \cite{Mo} as a remark; see also \cite{Ad1}.

\begin{proposition}\label{wpc}
A $\tau$-tilting-finite algebra with preprojective or preinjective component is representation-finite.
\end{proposition}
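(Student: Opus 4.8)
The plan is to reduce to the case of a preprojective component and then to produce infinitely many indecomposable $\tau$-rigid modules, which is impossible over a $\tau$-tilting-finite algebra. Write $\Gamma(\mod\Lambda)$ for the Auslander--Reiten quiver of $\Lambda$. First we dispose of the preinjective case by duality: the $K$-dual $D=\Hom_K(-,K)$ is a duality $\mod\Lambda\xrightarrow{\sim}\mod\Lambda^{\op}$ exchanging projectives with injectives and $\tau_\Lambda$ with $\tau^{-1}_{\Lambda^{\op}}$, so it carries a preinjective component of $\Gamma(\mod\Lambda)$ to a preprojective component of $\Gamma(\mod\Lambda^{\op})$ and preserves representation-finiteness; moreover $\Lambda$ is $\tau$-tilting-finite if and only if $\Lambda^{\op}$ is, the duality $\rhom_\Lambda(-,\Lambda)\colon\Kb(\proj\Lambda)\xrightarrow{\sim}\Kb(\proj\Lambda^{\op})$ inducing a bijection (up to shift) between two-term silting objects \cite{AIR}. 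Thus we may assume that $\Gamma(\mod\Lambda)$ has a preprojective component $\mathcal C$, and we argue by contradiction, supposing $\Lambda$ representation-infinite.

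Then $\mathcal C$ is necessarily infinite: since $\Lambda$ is indecomposable, $\Gamma(\mod\Lambda)$ is connected, so the full component $\mathcal C$, if finite, would be all of $\Gamma(\mod\Lambda)$, making $\Lambda$ representation-finite. Hence $\mathcal C$ has infinitely many isomorphism classes of indecomposable modules, and we will show each of them is $\tau$-rigid. This suffices: by \cite{AIR} every $\tau$-rigid module is a direct summand of some support $\tau$-tilting module, and each support $\tau$-tilting module has at most $n$ indecomposable summands, where $n$ is the number of simple $\Lambda$-modules; so if $\Lambda$ were $\tau$-tilting-finite it would admit only finitely many indecomposable $\tau$-rigid modules, a contradiction.

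The crux is therefore that every indecomposable module on a preprojective component is $\tau$-rigid. Here we invoke the standard structural fact (see, e.g., the monograph of Assem--Simson--Skowro\'nski) that the indecomposable modules of a preprojective component are directing, and it remains to observe that a directing module $M$ is $\tau$-rigid. If $M$ is projective this is clear. Otherwise $\tau M$ is a non-zero indecomposable module and there is an Auslander--Reiten sequence $0\to\tau M\to E\to M\to 0$ with $E\neq 0$; for any indecomposable summand $E_1$ of $E$, the components $\tau M\to E_1$ and $E_1\to M$ of the sequence are irreducible, hence non-zero and not isomorphisms. If $M\cong\tau M$ then $M\to E_1\to M$ is a cycle of non-isomorphisms through $M$, contradicting directedness; so $M\not\cong\tau M$, and any non-zero morphism $f\colon M\to\tau M$ is then a non-isomorphism. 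Consequently, if $\Hom_\Lambda(M,\tau M)\neq 0$, choosing such an $f$ yields the cycle $M\xrightarrow{f}\tau M\to E_1\to M$ of non-isomorphisms, again contradicting that $M$ is directing. Therefore $\Hom_\Lambda(M,\tau M)=0$, i.e. $M$ is $\tau$-rigid, and we are done.

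The main obstacle is the structural input used in the last step: that the indecomposable modules of a preprojective component are directing. This rests on genuine properties of such components --- no oriented cycles, closure under predecessors, generalized standardness --- which we would quote from the literature rather than reprove. The remaining ingredients --- the left--right symmetry of $\tau$-tilting-finiteness, connectedness of the Auslander--Reiten quiver of an indecomposable algebra, and the completion of $\tau$-rigid modules to support $\tau$-tilting modules --- are routine and are all available from \cite{AIR} and standard references.
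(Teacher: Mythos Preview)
The paper itself does not prove this proposition; it simply records it as a remark from \cite{Mo} (see also \cite{Ad1}). Your overall strategy is the standard one and is presumably what underlies those references: reduce to a preprojective component by duality, show that every indecomposable module in such a component is directing and hence $\tau$-rigid, and conclude via the fact (from \cite{AIR}, or more directly \cite{DIJ}) that a $\tau$-tilting-finite algebra has only finitely many indecomposable $\tau$-rigid modules.

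There is, however, one genuine error. You assert that ``since $\Lambda$ is indecomposable, $\Gamma(\mod\Lambda)$ is connected,'' and you list this again at the end among the ``routine'' ingredients. This is false in general: the Auslander--Reiten quiver of a connected representation-infinite algebra is typically \emph{not} connected --- already for the Kronecker algebra it splits into a preprojective component, a preinjective component, and an infinite family of homogeneous tubes, all disjoint. What you actually need at this step is Auslander's theorem: for a connected artin algebra, if the Auslander--Reiten quiver has a finite connected component, then the algebra is representation-finite (and that component is the whole quiver); see \cite{ARS} or \cite{ASS}. With this substitution your deduction that $\mathcal C$ must be infinite goes through unchanged, and the remainder of the argument (directing $\Rightarrow$ $\tau$-rigid, and the counting) is correct.
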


We give several classes of algebras as in Proposition \ref{wpc}.

A \emph{quasitilted} algebra is defined to be the endomorphism algebra of a tilting object $T$ over a hereditary abelian $K$-category $\H$.
When $\H=\mod K\Delta$ for some acyclic quiver $\Delta$,
the algebra is called \emph{tilted} of type $\Delta$.
If in additional, $T$ is preprojective, then the algebra is said to be \emph{concealed}.
We know from \cite{CH} that every quasitilted algebra admits a preprojective component.
This leads to the following corollary, which is a slight generalization of Zito's result \cite[Theorem 3.1]{Z}.

\begin{corollary}\label{Zito}
A $\tau$-tilting-finite quasitilted algebra is representation-finite.
\end{corollary}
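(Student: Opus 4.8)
The plan is to chain together two facts already available in the excerpt. From \cite{CH} we know that every quasitilted algebra $\Lambda$ admits a preprojective component in its Auslander--Reiten quiver. On the other hand, Proposition \ref{wpc} asserts that a $\tau$-tilting-finite algebra possessing a preprojective (or preinjective) component is representation-finite. So the proof is essentially a one-line deduction: given a $\tau$-tilting-finite quasitilted algebra $\Lambda$, the algebra has a preprojective component by \cite{CH}, hence by Proposition \ref{wpc} it is representation-finite.

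Concretely, first I would recall the definition of a quasitilted algebra as $\End_\H(T)$ for a tilting object $T$ in a hereditary abelian $K$-category $\H$, and cite \cite{CH} (Coelho--Happel) for the existence of a preprojective component; this is exactly the statement already quoted in the paragraph preceding the corollary, so no further work is needed there. Second, I would invoke Proposition \ref{wpc} with this component to conclude representation-finiteness. I would also add a sentence noting that this recovers and slightly extends Zito's \cite[Theorem 3.1]{Z}, whose statement presumably treats a more restrictive subclass (e.g. tilted algebras), whereas here the argument works uniformly for all quasitilted algebras because the input from \cite{CH} is stated at that level of generality.

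There is no real obstacle: all the heavy lifting is done in the two cited/previously established results, and the corollary is a formal combination of them. If anything, the only point requiring a moment's care is making sure the notion of ``preprojective component'' used in \cite{CH} matches the hypothesis of Proposition \ref{wpc} (the latter is phrased with ``preprojective or preinjective component''), but since \cite{CH} produces a genuine preprojective component this is immediate. I would therefore keep the proof to two or three sentences, essentially: \emph{By \cite{CH}, $\Lambda$ has a preprojective component; since $\Lambda$ is $\tau$-tilting-finite, Proposition \ref{wpc} shows $\Lambda$ is representation-finite.}
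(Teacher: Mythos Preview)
Your proposal is correct and matches the paper's approach exactly: the paper states the corollary immediately after recalling from \cite{CH} that every quasitilted algebra admits a preprojective component, so the intended proof is precisely the one-line deduction via Proposition~\ref{wpc} that you describe.
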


Let $\Lambda$ be an algebra associated to an acyclic quiver $Q$ and $i$ a vertex of $Q$.
We write the full subquiver of $Q$ generated by the non-predecessors of $i$ by $Q(i)$.
An algebra $\Lambda$ is said to \emph{satisfy the separation condition}
if for any vertex $i$ of $Q$, all distinct indecomposable summands of $\rad P_i$ have supports lying in different connected components of $Q(i)$.
Here, $P_i$ denotes the indecomposable projective module corresponding to $i$.
In the case, $\Lambda$ admits a preprojective component \cite[IX, Theorem 4.5]{ASS}.
So, we get the following corollary.

\begin{corollary}\label{separation}
A $\tau$-tilting-fintie algebra satisfying the separation condition is representation-finite.
\end{corollary}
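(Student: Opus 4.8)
The plan is to deduce Corollary \ref{separation} as an immediate consequence of Proposition \ref{wpc} together with a classical result on the existence of preprojective components for algebras satisfying the separation condition. Concretely, suppose $\Lambda$ is an algebra satisfying the separation condition which is $\tau$-tilting-finite. By definition $\Lambda$ is associated to an acyclic quiver $Q$, and the separation condition guarantees, by \cite[IX, Theorem 4.5]{ASS}, that the Auslander--Reiten quiver of $\mod\Lambda$ has a preprojective component. At that point Proposition \ref{wpc} applies verbatim: a $\tau$-tilting-finite algebra with a preprojective component is representation-finite. Hence $\Lambda$ is representation-finite.

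The only real content to address is making sure the hypotheses line up. First I would recall explicitly that ``satisfying the separation condition'' in the sense defined just before the statement presupposes that $\Lambda=KQ/I$ for an acyclic quiver $Q$; this is exactly the setting in which \cite[IX, Theorem 4.5]{ASS} is stated, so there is no gap there. Second, I would note that the cited theorem produces a preprojective component in the sense used by Proposition \ref{wpc} (a connected component of $\Gamma(\mod\Lambda)$ consisting of directing modules, closed under predecessors, with no oriented cycles and meeting only finitely many $\tau$-orbits), so the two notions of ``preprojective component'' agree and the composition of results is legitimate.

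The proof is therefore a two-line chain of implications, and I do not expect any genuine obstacle: the work has all been done in Proposition \ref{wpc} and in \cite{ASS}. If anything needs care, it is purely bibliographic, namely checking that the formulation of the separation condition here (all distinct indecomposable summands of $\rad P_i$ have supports in different connected components of $Q(i)$, where $Q(i)$ is the full subquiver on the non-predecessors of $i$) is literally the hypothesis of \cite[IX, Theorem 4.5]{ASS}; once that is confirmed, the corollary follows with no further argument.
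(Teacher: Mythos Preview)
Your proposal is correct and matches the paper's argument exactly: the paper also deduces the corollary immediately from Proposition~\ref{wpc} after invoking \cite[IX, Theorem 4.5]{ASS} to obtain a preprojective component from the separation condition. Your additional remarks about aligning hypotheses are fine but not strictly needed, since the paper's definition of the separation condition is taken verbatim from \cite{ASS}.
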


Since every tree quiver algebra satisfies the separation condition \cite[IX, Lemma 4.3]{ASS}, the following is also obtained.

\begin{corollary}\label{tree}
A $\tau$-tilting-finite tree quiver algebra is representation-finite.
\end{corollary}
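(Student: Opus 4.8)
The plan is to obtain this statement as an immediate specialization of Corollary \ref{separation}, the only extra ingredient being that tree quivers satisfy the separation condition. Recall that a \emph{tree quiver algebra} is an algebra $KQ/I$ in which the underlying graph of $Q$ is a finite tree; such a $Q$ is in particular acyclic, so the separation condition is meaningful for it. I would simply invoke \cite[IX, Lemma 4.3]{ASS}, which asserts that every tree quiver algebra satisfies the separation condition, and then conclude: if $\Lambda=KQ/I$ is a tree quiver algebra that is $\tau$-tilting-finite, then Corollary \ref{separation} applies verbatim and yields that $\Lambda$ is representation-finite.

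For the reader's convenience I would also indicate why \cite[IX, Lemma 4.3]{ASS} holds. Fix a vertex $i$ of $Q$. Every indecomposable summand $M$ of $\rad P_i$ has connected support contained in $Q(i)$, and its top contains a simple module $S_j$ for some $j$ with an arrow $i\to j$; since $Q$ is a tree, the multiplicity of any such $S_j$ in the top of $\rad P_i$ is at most one, so two distinct summands $M_1,M_2$ determine distinct vertices $j_1\neq j_2$. If $\operatorname{supp}M_1$ and $\operatorname{supp}M_2$ met the same connected component of $Q(i)$, there would be a walk in $Q(i)$ joining $j_1$ to $j_2$, hence a walk in $Q$ avoiding $i$; adjoining the arrows $i\to j_1$ and $i\to j_2$ would produce a cycle in the underlying graph of $Q$, contradicting the tree hypothesis. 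Hence the supports of the summands of $\rad P_i$ lie in pairwise distinct components of $Q(i)$, which is exactly the separation condition.

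There is no real obstacle here: the result is a direct corollary of what precedes it. The only point warranting a moment's attention is to confirm that the convention for ``tree quiver'' in force (underlying graph a tree, so in particular no multiple arrows and no cycles, oriented or not) is the one under which \cite[IX, Lemma 4.3]{ASS} is stated, which is the standard one in \cite{ASS}; with that settled, nothing further is needed.
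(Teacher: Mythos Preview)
Your proof is correct and follows the same route as the paper: both deduce the corollary from Corollary~\ref{separation} together with \cite[IX, Lemma 4.3]{ASS}, which says that any tree quiver algebra satisfies the separation condition. Your added explanation of why that lemma holds is a welcome bonus not present in the paper.
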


We study a nice (isomorphism) class $\C$ of algebras which are representation-finite or have a tame concealed algebra as a factor.
Such a class contains the classes of algebras with a preprojective component \cite[XIV, Theorem 3.1]{SS}, cycle-finite algebras \cite{MS} and loop-finite algebras \cite[Theorem 4.5]{S2}.
Here is a generalization of Proposition \ref{wpc}.

\begin{proposition}\label{TFRF}
A $\tau$-tilting-finite algebra in $\C$ is representation-finite.
\end{proposition}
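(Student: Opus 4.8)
The plan is to reduce the statement to Proposition \ref{wpc} by a dichotomy argument on the class $\C$. By definition of $\C$, an algebra $\Lambda\in\C$ is either representation-finite—in which case there is nothing to prove—or it admits a tame concealed algebra $B$ as a factor algebra, say $B\cong\Lambda/I$ for some ideal $I$. The first step is therefore to assume we are in the second case and derive a contradiction from the hypothesis that $\Lambda$ is $\tau$-tilting-finite. The key point is that $\tau$-tilting-finiteness descends along factor (quotient) algebras: if $\Lambda$ is $\tau$-tilting-finite and $B$ is a factor algebra of $\Lambda$, then $B$ is $\tau$-tilting-finite as well. This is a standard fact (a surjection $\Lambda\to B$ induces, via the two-term silting description in $\Kb(\proj\Lambda)$, an injection $\sttilt B\hookrightarrow\sttilt\Lambda$, or equivalently $\mod B$ is a functorially finite torsion-theoretically well-behaved subcategory of $\mod\Lambda$), so $|\sttilt B|\le|\sttilt\Lambda|<\infty$.

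The second step is to observe that a tame concealed algebra is \emph{not} $\tau$-tilting-finite. Indeed, a tame concealed algebra is a concealed algebra of Euclidean (extended Dynkin) type; it is tilted from a hereditary algebra $K\Delta$ with $\Delta$ an acyclic quiver of Euclidean type, and in particular it is a representation-infinite tilted algebra. Concealed algebras have a preprojective component and a preinjective component, and being representation-infinite they have infinitely many indecomposables in, say, the preprojective component; one then checks—either by citing that representation-infinite tilted (or more specifically concealed) algebras are $\tau$-tilting-infinite, e.g. because they are not $\tau$-tilting-finite by the hereditary-type classification, or directly by exhibiting infinitely many support $\tau$-tilting modules coming from the preprojective slices—that $B$ is $\tau$-tilting-infinite. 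This contradicts the conclusion of the first step.

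Hence the second case cannot occur, so $\Lambda$ must be representation-finite, which is precisely the assertion. The main obstacle is making the second step fully rigorous: one needs a clean reference or short argument that every tame concealed algebra is $\tau$-tilting-infinite. The cleanest route is to invoke that a tame concealed algebra $B$ has a preprojective component and is representation-infinite, and then apply the contrapositive of Proposition \ref{wpc} to $B$ itself—Proposition \ref{wpc} says a $\tau$-tilting-finite algebra with a preprojective component is representation-finite, so a representation-infinite algebra with a preprojective component (such as a tame concealed algebra) is $\tau$-tilting-infinite. Combined with the descent of $\tau$-tilting-finiteness to the factor $B$, this immediately yields the contradiction and completes the proof. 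Thus Proposition \ref{TFRF} follows formally from Proposition \ref{wpc} together with the closure of $\tau$-tilting-finiteness under taking factor algebras.
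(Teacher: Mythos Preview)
Your argument is correct and follows essentially the same two-step strategy as the paper: first use that $\tau$-tilting-finiteness passes to factor algebras (this is \cite[Theorem 5.12(d)]{DIRRT}), and then conclude that the tame concealed factor cannot be $\tau$-tilting-finite. The only cosmetic difference is that the paper cites Corollary~\ref{Zito} (tame concealed algebras are quasitilted and representation-infinite) rather than Proposition~\ref{wpc} directly; since Corollary~\ref{Zito} is itself an immediate consequence of Proposition~\ref{wpc}, the two routes coincide.
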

\begin{proof}
Combine Corollary \ref{Zito} and \cite[Theorem 5.12(d)]{DIRRT}.
\end{proof}

A \emph{commutative ladder} of degree $n$ is an algebra presented by the quiver
\[\xymatrix{
1 \ar[r]\ar[d] & 2 \ar[r]\ar[d] & \cdots \ar[r]& n \ar[d] \\
1' \ar[r] & 2' \ar[r] & \cdots \ar[r] & n'
}\]
with all possible commutative relations,
which is isomorphic to $K\overrightarrow{\mathbb{A}_2}\otimes K\overrightarrow{\mathbb{A}_n}$.
Here, $\overrightarrow{\mathbb{A}_n}$ stands for the linearly oriented quiver of type $\mathbb{A}_n$.
By \cite[Theorem 3]{EH}, a commutative ladder of degree $n$ is representation-finite if and only if $n\leq4$.
We derive a corollary from Proposition \ref{TFRF}.

\begin{corollary}\label{cl}
A $\tau$-tilting-finite commutative ladder is representation-finite.
\end{corollary}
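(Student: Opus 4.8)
The plan is to derive this as a direct consequence of Proposition \ref{TFRF}, so the entire task reduces to verifying that every commutative ladder $\Lambda_n := K\overrightarrow{\mathbb{A}_2}\otimes K\overrightarrow{\mathbb{A}_n}$ belongs to the class $\C$; that is, $\Lambda_n$ is either representation-finite or has a tame concealed algebra as a factor (equivalently, as a quotient). First I would dispose of the cases $n\leq 4$: by \cite[Theorem 3]{EH} these ladders are representation-finite and hence trivially lie in $\C$ (and the conclusion of the corollary is vacuous for them anyway, since representation-finite algebras are automatically $\tau$-tilting-finite and representation-finite). So the substance is the case $n\geq 5$, where I must exhibit a tame concealed quotient.

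The natural approach for $n\geq 5$ is to note that $\Lambda_{n}$ has $\Lambda_{n-1}$ (and more generally $\Lambda_m$ for every $m\leq n$) as a quotient algebra: collapsing the quiver of $\Lambda_n$ onto the first $m$ columns and killing the arrows leaving column $m$ realizes $\Lambda_m$ as $\Lambda_n / I$ for a suitable ideal $I$ generated by a set of arrows and vertices. Hence it suffices to produce a tame concealed algebra that is a quotient of $\Lambda_5$. For this I would use the standard fact that a commutative ladder of degree $5$ is a tilted algebra (indeed concealed) of extended Dynkin type: the poset $\overrightarrow{\mathbb{A}_2}\times\overrightarrow{\mathbb{A}_5}$, or a small quotient thereof, is representation-infinite tame, and the relevant tame concealed algebra appears in Bongartz's list — concretely one of the frames associated with $\widetilde{\mathbb{E}}$-type diagrams. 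The cleanest route is to identify an explicit convex subquiver-with-relations of $\Lambda_5$ that is precisely one of the known tame concealed algebras (for instance of type $\widetilde{\mathbb{E}}_7$ or $\widetilde{\mathbb{E}}_8$), and to observe that passing to this convex subcategory and then to the corresponding quotient of $\Lambda_n$ yields the required tame concealed factor. Since $\Lambda_5$ is representation-infinite (again by \cite[Theorem 3]{EH}) and is a gentle/special biserial-type commutative algebra of polynomial growth, the dichotomy forces it to have such a tame concealed quotient.

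Once $\Lambda_n\in\C$ is established for all $n$, the corollary follows immediately: if $\Lambda_n$ is $\tau$-tilting-finite then, being in $\C$, Proposition \ref{TFRF} gives that it is representation-finite. The main obstacle I anticipate is the bookkeeping in the middle step — pinning down exactly which tame concealed algebra sits inside $\Lambda_5$ and checking carefully that it is a genuine quotient (not merely a convex subcategory) of $\Lambda_n$ for all $n\geq 5$, so that the hypothesis of $\C$ ("has a tame concealed algebra as a factor") is met on the nose. If one prefers to avoid explicit frame-hunting, an alternative is to invoke directly that $\Lambda_n$ for $n\geq 5$ is a representation-infinite algebra of polynomial growth, hence (by the structure theory of such algebras, e.g. \cite{SS} or the cited works on the class $\C$) cannot be domestic-free of tame concealed quotients — but the explicit-quotient argument is more self-contained and is the version I would write up.
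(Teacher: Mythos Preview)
Your approach is essentially the same as the paper's: both reduce to Proposition~\ref{TFRF} by showing that every commutative ladder lies in $\C$, using that $\Lambda_5$ is a factor of $\Lambda_n$ for $n\geq 5$ and then exhibiting a tame concealed quotient of $\Lambda_5$. The paper completes exactly the bookkeeping step you flagged: the factor of $\Lambda_5$ by the idempotents at vertices $1$ and $5'$ is, via the Happel--Vossieck list \cite{HV}, a tame concealed algebra of type $\widetilde{\mathbb{E}_7}$ (your aside that $\Lambda_5$ is ``gentle/special biserial-type'' is not accurate, but it plays no role in the argument).
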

\begin{proof}
Let $\Lambda$ be a commutative ladder of degree 5.
As the Happel--Vossieck list \cite{HV} (see also \cite{R}), the factor algebra of $\Lambda$ by the idempotents corresponding to the vertices 1 and $5'$ is a tame concealed algebra of type $\widetilde{\mathbb{E}_7}$.
Observe that a commutative ladder of degree $\geq5$ has $\Lambda$ as a factor.
Thus the class of commutative ladders is contained in $\C$.
\end{proof}

We can also deduce Corollary \ref{cl} from Corollary \ref{separation}; this is because a commutative ladder satisfies the separation condition, since all indecomposable projectives have indecomposable radicals.

\begin{remark}\label{Wang}
Inspired by this work, the fourth named author of this paper showed that any $\tau$-tilting-finite strongly simply-connected algebra is representation-finite \cite[Theorem 2.6]{W},
which generalizes Corollary \ref{tree} and \ref{cl}.
\end{remark}

Let us discuss algebras with radical square zero.
To do that, we first recall the definition of separated quivers.

For a quiver $Q$, we construct a new quiver $Q^s$ as follows:
\begin{itemize}
\item the vertices of $Q^s$ are those of $Q$ and their copies;
we donote by $i'$ the copy of a vertex $i$ of $Q$.
\item an arrow $a\to b$ of $Q^s$ are drawn whenever $a$ is a vertex $i$ of $Q$, $b$ is the copy of a vertex $j$ of $Q$, and $Q$ has an arrow $i\to j$.
\end{itemize}
We call the acyclic quiver $Q^s$ the \emph{separated quiver} of $Q$.

As is well-known, a radical square zero algebra presented by a quiver $Q$ is stable equivalent to the hereditary algebra $KQ^s$ \cite[X, Theorem 2.4]{ARS}.
Moreover, we can exactly understand $\tau$-tilting-finite algebras with radical square zero from the shape of $Q^s$ \cite{Ad1}.

Thanks to these results, we show the following result.

\begin{theorem}\label{rs0}
Let $\Lambda$ be an algebra presented by a tree quiver with radical square zero.
\begin{enumerate}
\item If $\Lambda$ is $\tau$-tilting-finite, then it is representation-finite.
\item If the trivial extension of $\Lambda$ is $\tau$-tilting-finite, then it is representation-finite.
\end{enumerate}
\end{theorem}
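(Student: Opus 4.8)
The plan is to reduce both parts to the shape of the separated quiver $Q^{s}$, using that $\Lambda$ is stably equivalent to the hereditary algebra $KQ^{s}$ \cite[X, Theorem 2.4]{ARS} --- whence, by Gabriel's theorem, $\Lambda$ is representation-finite exactly when $Q^{s}$ is a disjoint union of Dynkin quivers --- together with the criterion of \cite{Ad1} describing $\tau$-tilting-finiteness of a radical square zero algebra in terms of $Q^{s}$.

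For (1), I would first observe that if $Q$ is a tree then $Q^{s}$ is a forest: every arrow of $Q^{s}$ runs from an unprimed vertex $i$ to a primed vertex $j'$ and is induced by an arrow $i\to j$ of $Q$, so a cycle in the underlying graph of $Q^{s}$ would translate into a closed walk $i_{1}\to j_{1}\leftarrow i_{2}\to j_{2}\leftarrow\cdots\leftarrow i_{1}$ in $Q$, i.e. into a cycle of the underlying graph of $Q$, which does not exist. In particular $Q^{s}$ has no loops, no multiple arrows and no component of type $\widetilde{\mathbb{A}}_{n}$. The second step is to check that, for such a $Q^{s}$, the criterion of \cite{Ad1} is equivalent to ``$Q^{s}$ is a disjoint union of Dynkin quivers'', i.e. to representation-finiteness of $\Lambda$: the configurations that let a radical square zero algebra be $\tau$-tilting-finite yet representation-infinite come from loops of $Q$ (and the $\widetilde{\mathbb{A}}$-type pieces of $Q^{s}$ that loops create), and a tree quiver produces none of these, so on a forest $Q^{s}$ the criterion of \cite{Ad1} forces every component to be Dynkin. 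Granting this, $\Lambda$ is $\tau$-tilting-finite if and only if it is representation-finite, which yields (1). Carrying out this last comparison --- matching the forbidden configurations of \cite{Ad1} against the restriction that $Q^{s}$ is a disjoint union of trees --- is the only step that requires genuine, if routine, care, and is where I expect the main obstacle to lie.

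For (2), I would first descend from $T(\Lambda)$ to $\Lambda$. Writing $T(\Lambda)=\Lambda\ltimes D\Lambda$, the subspace $D\Lambda$ is a two-sided ideal of $T(\Lambda)$ with $T(\Lambda)/D\Lambda\cong\Lambda$; since $\tau$-tilting-finiteness passes to quotient algebras, $\tau$-tilting-finiteness of $T(\Lambda)$ forces that of $\Lambda$, and then part (1) shows $\Lambda$ is representation-finite. It remains to promote this to representation-finiteness of $T(\Lambda)$. For this I would use that $\Lambda=KQ/\rad^{2}$ with $Q$ a tree is triangular with trivial fundamental group, hence simply connected; a representation-finite simply connected algebra is iterated tilted of Dynkin type (by the covering theory of representation-finite algebras); and, by the theorem of Hughes--Waschb\"usch together with its converse, the trivial extension of an iterated tilted algebra of Dynkin type is representation-finite. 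Composing these implications shows $T(\Lambda)$ is representation-finite, proving (2).

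Two alternatives are worth recording. For (1) one can instead try to show that $\Lambda=KQ/\rad^{2}$ with $Q$ a tree always has a preprojective component (trivially when $\Lambda$ is representation-finite, and via the stable equivalence with $KQ^{s}$ when it is not, the absence of loops in $Q$ being what legitimises transporting the preprojective component) and then invoke Proposition \ref{wpc}; the content is then concentrated in that transport statement. For (2), in place of the simply connected machinery one could write down the bound quiver of $T(\Lambda)$ explicitly and read off its representation type, but this is appreciably more laborious.
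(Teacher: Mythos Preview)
Your approach to (2) matches the paper's: descend $\tau$-tilting-finiteness from $T(\Lambda)$ to $\Lambda$ via the quotient $T(\Lambda)/D\Lambda\cong\Lambda$ (the paper cites \cite[Theorem 5.12(d)]{DIRRT}), apply (1), then use that $\Lambda$ is simply connected with positive definite quadratic form, hence iterated tilted of Dynkin type (the paper cites \cite[Proposition 5.1]{AS}), and finish with \cite[Theorem 3.1]{AHR}.

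For (1), your main line has a genuine gap, and it sits exactly where you flagged uncertainty---but the obstacle is not the routine check you anticipate. The reduction ``$Q^{s}$ is a forest'' is correct yet is not the invariant that makes Adachi's criterion collapse, and your diagnosis that the $\tau$-tilting-finite/representation-infinite configurations ``come from loops of $Q$'' is false. Example~\ref{EXrs0}(1) of the paper exhibits a loop-free acyclic (non-tree) quiver $Q$ whose separated quiver is a forest---one tree component of type $\widetilde{\mathbb{D}}_{5}$ together with isolated vertices---while $\Lambda$ is $\tau$-tilting-finite and representation-infinite. So ``$Q^{s}$ is a forest'' does not force the components of $Q^{s}$ to be Dynkin under \cite{Ad1}. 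The paper's combinatorial proof uses a sharper fact: when $Q$ is a tree, \emph{no connected component of $Q^{s}$ contains both $i$ and $i'$} (a path from $i$ to $i'$ in $Q^{s}$ would project to a closed walk of odd length in the bipartite graph $Q$). This ``single'' condition is precisely the hypothesis under which \cite[Theorem 3.1]{Ad1} applies to each component directly and forces it to be Dynkin. Your alternative route via a preprojective component is in fact the paper's first (and cleanest) proof, done more simply: a tree quiver algebra satisfies the separation condition \cite[IX, Lemma 4.3]{ASS}, hence has a preprojective component, and Proposition~\ref{wpc} applies---no transport through the stable equivalence is needed.
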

\begin{proof}
(1) This is due to Corollary \ref{tree}, but we give another proof here, in which we use combinatorial discussion.

As the quiver of $\Lambda$ is tree, we observe that every connected component $R$ of the separated quiver has no same latter $i$ and $i'$.
Then we can apply \cite[Theorem 3.1]{Ad1} for $R$ to deduce the fact that $R$ is of Dynkin type, since $\Lambda$ is $\tau$-tilting-finite.
Hence, it follows from \cite[X, Theorem 2.6]{ARS} that $\Lambda$ is representation-finite.

(2) If the trivial extension $T(\Lambda)$ of $\Lambda$ is $\tau$-tilting-finite, then so is $\Lambda$ by \cite[Theorem 5.12(d)]{DIRRT}, and hence $\Lambda$ is representation-finite by (1).
We observe that $\Lambda$ is simply-connected and has the quadratic form of positive definite, which implies that it is an iterated tilted algebra of Dynkin type \cite[Proposition 5.1]{AS}. (See also \cite{H}.)
It follows from \cite[Theorem 3.1]{AHR} that $T(\Lambda)$ is representation-finite.
\end{proof}

Theorem \ref{rs0} does not necessarily hold if $\Lambda$ is given by a non-tree acyclic quiver.

\begin{example}\label{EXrs0}
\begin{enumerate}
\item Let $\Lambda$ be an algebra presented by the quiver
\[\xymatrix{
1 \ar[r]\ar[dr]\ar[d] & 2 \ar[d] \\
3 \ar[r] & 4
}\]
with radical square zero.
Then the separated quiver is the following:
\[\xymatrix{
& & 1 \ar[dl]\ar[d]\ar[dr] & 2 \ar[d] & 3 \ar[dl] & 4 \\
1' & 2' & 3' & 4'
}\]
Observe that it contains the extended Dynkin diagram $\widetilde{\mathbb{D}_5}$ as an underlying graph,
whence $\Lambda$ is $\tau$-tilting-finite by \cite{Ad1} but not representation-finite by \cite{ARS}.

\item Let us consider the algebra presented by the quiver
\[\xymatrix{
 & 2 \ar[dr] & \\
1 \ar[rr]\ar[ru] &&3
}\]
with radical square zero.
Then the trivial extension is the Brauer graph algebra given by the Brauer graph
\[\xymatrix{
 & \circ \ar@{-}[dr] & \\
\circ \ar@{-}[rr]\ar@{-}[ru] && \circ
}\]
This is $\tau$-tilting-finite by \cite[Theorem 6.7]{AAC} but not representation-finite.
\end{enumerate}
\end{example}

Let $Q$ be a quiver.
The \emph{double quiver} of $Q$, denoted by $Q^d$, is constructed from $Q$ by adding the inverse arrow of every arrow in $Q$.
Here is an easy observation.

\begin{proposition}\label{dq}
Let $Q$ be a tree quiver and $\Lambda$ an algebra presented by the double quiver of $Q$ with relations.
If $\Lambda$ is $\tau$-tilting-finite, then $Q$ is of Dynkin type.
\end{proposition}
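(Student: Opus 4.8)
The plan is to reduce to the radical square zero case and then invoke Adachi's characterization \cite[Theorem 3.1]{Ad1}, in the same spirit as the proof of Theorem \ref{rs0}(1). Write $\Lambda=KQ^d/I$ for an admissible ideal $I$; then $I\subseteq\rad^2(KQ^d)$, so the radical square zero algebra $\Lambda_0:=KQ^d/\rad^2(KQ^d)=\Lambda/\rad^2\Lambda$ is a quotient of $\Lambda$ (and it is finite dimensional, although $KQ^d$ itself is in general not, since $Q^d$ has a $2$-cycle as soon as $Q$ has an arrow). Hence, if $\Lambda$ is $\tau$-tilting-finite, then so is $\Lambda_0$ by \cite[Theorem 5.12(d)]{DIRRT}.

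The next step is to identify the separated quiver $(Q^d)^s$ of $\Lambda_0$. For each edge $\{i,j\}$ of the tree $Q$, the double quiver $Q^d$ has the two arrows $i\to j$ and $j\to i$, which contribute the edges $\{i,j'\}$ and $\{j,i'\}$ to $(Q^d)^s$. Since $Q$ is a tree, its underlying graph $\overline{Q}$ is connected and bipartite; fix a bipartition $V_0\sqcup V_1$ of the vertex set with every edge joining $V_0$ and $V_1$. I would then check that the subsets $S_1=\{i: i\in V_0\}\cup\{i': i\in V_1\}$ and $S_2=\{i: i\in V_1\}\cup\{i': i\in V_0\}$ are exactly the connected components of $(Q^d)^s$: for an edge $\{i,j\}$ of $Q$ with $i\in V_0$ and $j\in V_1$, the edge $\{i,j'\}$ lies inside $S_1$ and $\{j,i'\}$ lies inside $S_2$, so no edge crosses, and the assignment $i\mapsto i$ on $V_0$, $j\mapsto j'$ on $V_1$ is a graph isomorphism $\overline{Q}\cong S_1$, and symmetrically for $S_2$. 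Thus $(Q^d)^s$ is a disjoint union of two copies of $\overline{Q}$, and, because $V_0\cap V_1=\emptyset$, no connected component contains both a vertex $i$ and its copy $i'$.

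Finally, since $\Lambda_0$ is $\tau$-tilting-finite, \cite[Theorem 3.1]{Ad1} applies to each connected component of $(Q^d)^s$ — whose hypothesis that no letter occurs together with its copy is guaranteed by the previous paragraph — and forces that component to be of Dynkin type. That component is $\overline{Q}$, so $Q$ is of Dynkin type, as claimed. The only step that needs any care is the combinatorial identification $(Q^d)^s\cong\overline{Q}\sqcup\overline{Q}$, where what is really being used is that a tree is a connected bipartite graph, so that its bipartite double cover splits into two copies of itself; beyond this bookkeeping no genuine obstacle arises.
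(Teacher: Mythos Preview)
Your proof is correct and follows exactly the same route as the paper's: pass to $\Lambda/\rad^2\Lambda$ via \cite{DIRRT}, identify the separated quiver of $Q^d$ as two disjoint copies of the underlying graph of $Q$ with the property that no component contains both $i$ and $i'$, and then invoke \cite[Theorem 3.1]{Ad1}. The only difference is expository: where the paper simply asserts the decomposition of $(Q^d)^s$ into $R_1\sqcup R_2$ with $i\in R_j\Leftrightarrow i'\notin R_j$, you spell out the mechanism by using the bipartition $V_0\sqcup V_1$ of the tree $Q$ to exhibit the two components explicitly, which is a helpful clarification but not a different argument.
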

\begin{proof}
By assumtion, it follows from \cite{DIRRT} that $\Lambda/\rad^2\Lambda$ is $\tau$-tilting-finite.
We observe that the separated quiver of $Q^d$ is the disjoint union of two quivers $R_1$ and $R_2$ which satisfy $i\in R_j\Leftrightarrow i'\not\in R_j$ ($j=1,2$) and whose underlying graphs coincide with that of $Q$.
We apply \cite{Ad1} to deduce the fact that $R_1, R_2$, and hence $Q$, are of Dynkin type.
\end{proof}

Let us discuss the locally hereditary case.
An algebra is said to be \emph{locally hereditary} provided every homomorphism between indecomposable projective modules is a monomorphism or zero.
We know that such an algebra is presented by an acyclic quiver and the relations contain no monomials.
We show the following theorem.

\begin{theorem}\label{lh}
A $\tau$-tilting-finite locally hereditary algebra is representation-finite.
\end{theorem}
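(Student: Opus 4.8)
The plan is to reduce the problem to the radical-square-zero situation, where the combinatorial machinery of separated quivers (as used in Theorem \ref{rs0} and Proposition \ref{dq}) applies. First I would recall that a locally hereditary algebra $\Lambda$ is presented by an acyclic quiver $Q$ with relations none of which is a monomial; the point of this hypothesis is that no arrow of $Q$ is ``killed'' by a relation, so $\Lambda$ and $\Lambda/\rad^2\Lambda$ have the same quiver $Q$. Since $\Lambda$ is $\tau$-tilting-finite, the algebra $\Lambda/\rad^2\Lambda$ is $\tau$-tilting-finite as well by \cite[Theorem 5.12(d)]{DIRRT} (it is a quotient of $\Lambda$), and it is the radical-square-zero algebra $KQ/\rad^2$. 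By \cite{Ad1} (see also the discussion preceding Theorem \ref{rs0}), $\tau$-tilting-finiteness of $KQ/\rad^2$ forces every connected component of the separated quiver $Q^s$ to be a disjoint union of Dynkin diagrams.

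Next I would translate this constraint on $Q^s$ back into a constraint on $\Lambda$. The separated quiver $Q^s$ being a disjoint union of Dynkin quivers bounds the ``local'' shape of $Q$: at each vertex $i$, the number of arrows into $i$ (seen in the $i'$-part) and out of $i$ is severely restricted, and more globally the underlying graph of $Q$ cannot contain a subquiver whose separated quiver is extended Dynkin. I expect that this is enough to conclude that $Q$ itself is of Dynkin (or at worst of a controlled, finite) type, and then that $\Lambda$ — being locally hereditary, hence sitting ``between'' $KQ$ and $KQ/\rad^2$ in a strong sense — has a preprojective component. Concretely, a locally hereditary algebra with $Q$ of Dynkin type is a quotient of the hereditary algebra $KQ$ by an admissible ideal of relations, and one should be able to invoke the existence of a preprojective component for such algebras (for instance via the separation condition of Corollary \ref{separation}, since the relations being non-monomial and $Q$ being a suitable tree/Dynkin quiver make the radicals of the projectives sufficiently rigid, or directly via tilting theory since $KQ$ is representation-finite).

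Once a preprojective (or preinjective) component is produced, Proposition \ref{wpc} immediately gives that the $\tau$-tilting-finite algebra $\Lambda$ is representation-finite, finishing the proof. The main obstacle I anticipate is the middle step: passing from ``$Q^s$ is a union of Dynkin quivers'' to a usable structural statement about $\Lambda$ itself. In the radical-square-zero case one has the stable equivalence $\Lambda/\rad^2 \sim KQ^s$ to leverage, but for general locally hereditary $\Lambda$ one must argue that the extra relations in degrees $\ge 3$ do not destroy the existence of a preprojective component — and that requires either a careful covering/separation argument or a direct case analysis of which acyclic quivers $Q$ can occur. I would first try to show $Q$ must be a tree of Dynkin type (so that Corollary \ref{tree} or \ref{separation} applies verbatim), and only if that fails handle the remaining acyclic cases by hand.
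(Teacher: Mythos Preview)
Your reduction to $\Lambda/\rad^2\Lambda$ is fine, but the argument breaks at two places.

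First, step 3 overstates Adachi's criterion. It is \emph{not} true that $\tau$-tilting-finiteness of $KQ/\rad^2 KQ$ forces every connected component of $Q^s$ to be Dynkin: Example~\ref{EXrs0}(1) in this very paper exhibits an acyclic quiver $Q$ with $KQ/\rad^2 KQ$ $\tau$-tilting-finite whose separated quiver has a component of type $\widetilde{\mathbb D}_5$. The result \cite[Theorem 3.1]{Ad1}, as used in Theorem~\ref{rs0}, only yields the Dynkin conclusion for components of $Q^s$ that do not contain both $i$ and $i'$ for any vertex $i$; for a general acyclic $Q$ this hypothesis fails, and the quiver of a locally hereditary algebra need not be a tree, so you cannot invoke the tree-quiver argument of Theorem~\ref{rs0}(1).

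Second, even granting some Dynkin-type constraint on $Q^s$, your plan to conclude that $Q$ itself is a tree (so that Corollary~\ref{tree} or~\ref{separation} applies) cannot succeed. The commutative square, and more generally any commutative ladder of degree $\leq 4$, is a locally hereditary, $\tau$-tilting-finite algebra whose quiver is not a tree; so ``handle the remaining acyclic cases by hand'' is not a finite task with the tools you have set up. Producing a preprojective component directly for an arbitrary locally hereditary algebra is also not obvious.

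The paper takes a different route that sidesteps both issues. From $\tau$-tilting-finiteness one deduces that $Q$ contains no subquiver of extended Dynkin type; combined with the absence of monomial relations, this forces $\Lambda$ to carry \emph{all} possible commutativity relations, so that $\Lambda$ is an incidence-type algebra and in particular strongly simply-connected (cf.\ \cite{L}). The conclusion then follows from Wang's theorem \cite[Theorem 2.6]{W} (see Remark~\ref{Wang}) that $\tau$-tilting-finite strongly simply-connected algebras are representation-finite. The key idea you are missing is precisely this passage through ``strongly simply-connected'', which accommodates non-tree quivers such as commutative ladders and replaces the separated-quiver combinatorics entirely.
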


\begin{proof}
Let $\Lambda$ be a $\tau$-tilting-finite locally hereditary algebra.
As is well-known, the local hereditariness yields that $\Lambda$ has no monomial relation and the quiver $Q$ is acyclic.
The $\tau$-tilting-finiteness implies that $Q$ does not contain a subquiver of extended Dynkin type, whence $\Lambda$ admits all possible commutative relations.
Then, we figure out that $\Lambda$ is strongly simply-connected; see \cite{L} for example.
The assertion follows from \cite[Theorem 2.6]{W}.
\end{proof}

We close this section by giving an interesting observation.
Denote by $\A$ the class of algebras in which $\tau$-tilting-finiteness implies representation-finiteness;
we put a hierarchy of classes contained in $\A$:
\begin{center}
\[\xymatrix@R=1cm@C=0.2cm{
*+[F]\txt{local. hered.'s}&*+[F]\txt{gentle's}&*+[F]\txt{alg. which are rep.-fin.\\ or have tame conceal.\\ as a factor}\ar@{-}[dr]\ar@{-}[dl]\ar@{-}[d]&*+[F]\txt{triv. ext. of\\ tree quiver's\\ with $\rad^2=0$}&*+[F]\txt{strongly\\ simply-conn.'s}\\
&*+[F]\txt{cycle-finite's}&*+[F]\txt{with a preprojective/\\ preinjective component}\ar@{-}[d]\ar@{-}[ld]&*+[F]\txt{loop-finite's}&\\
&*+[F]\txt{quasitilted's}\ar@{-}[ld]&*+[F]\txt<3cm>{satisfying the separation cond.}\ar@{-}[dr]\ar@{-}[drr]&&\\
*+[F]\txt{hereditary's}\ar@{-}[uuu]&&&*+[F]\txt{comm. ladd.}\ar@{-}[uuur]&*+[F]\txt{tree quiver's}\ar@{-}[uuu]
}\]
\end{center}

\begin{proposition}
The class $\A$ is closed under taking factors by ideals contained in the center and the radical.
\end{proposition}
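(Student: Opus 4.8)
The plan is to take a $\tau$-tilting-finite algebra $\Lambda$ in $\A$ together with an ideal $I$ with $I\subseteq \rad\Lambda$ and $I$ central, set $\Gamma:=\Lambda/I$, and show $\Gamma$ is representation-finite. First I would record that $\Gamma$ is again a basic indecomposable finite-dimensional algebra over $K$ (indecomposability passes to factors since the quiver of $\Gamma$ is obtained from that of $\Lambda$ by possibly deleting arrows but keeping all vertices connected, as $I\subseteq\rad\Lambda$), so $\Gamma$ is a legitimate object to feed back into the hypothesis ``$\Lambda\in\A$''.

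The two hypotheses on $I$ should be used as follows. Because $I\subseteq\rad\Lambda$, the factor map $\Lambda\twoheadrightarrow\Gamma$ induces a surjection $\sttilt\Lambda\to\sttilt\Gamma$; this is exactly \cite[Theorem 5.12(d)]{DIRRT} (already invoked several times in the paper, e.g. in the proof of Theorem \ref{rs0}), which says that $\Gamma$ is $\tau$-tilting-finite whenever $\Lambda$ is. Hence $\Gamma$ is a $\tau$-tilting-finite algebra. The second hypothesis, $I\subseteq Z(\Lambda)$, is what lets us stay inside the class $\A$ rather than merely inside the $\tau$-tilting-finite algebras: the point is that algebras in $\A$ are (in the examples of this paper) detected by homological/combinatorial data that a central nilpotent factor does not disturb. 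Concretely, I would argue that if $\Lambda\in\A$ then $\Gamma=\Lambda/I\in\A$ by noting that the defining property of $\A$ membership is inherited: a central ideal inside the radical changes neither the separated quiver (for the radical-square-zero-type classes), nor the underlying graph governing tameness/wildness, nor the property of admitting a preprojective component, nor local hereditariness in the relevant sense — and, crucially, $\Lambda\in\A$ already forces $\Lambda$ to be representation-finite once we know (from the previous paragraph) that $\Gamma$, and hence $\Lambda$, is $\tau$-tilting-finite; then any factor of a representation-finite algebra is representation-finite.

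Actually the cleanest route avoids case analysis entirely: assume $\Lambda\in\A$ and $\Gamma:=\Lambda/I$ is $\tau$-tilting-finite. Since the surjection $\Lambda\twoheadrightarrow\Gamma$ has kernel $I\subseteq\rad\Lambda$, the surjectivity of $\sttilt\Lambda\to\sttilt\Gamma$ shows $\Lambda$ is $\tau$-tilting-finite as well; because $\Lambda\in\A$, $\Lambda$ is representation-finite; and every factor algebra of a representation-finite algebra is representation-finite (restriction along $\Lambda\twoheadrightarrow\Gamma$ embeds $\mod\Gamma$ as a full subcategory of $\mod\Lambda$, closed under subs and quotients). Therefore $\Gamma$ is representation-finite, i.e. $\Gamma\in\A$. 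In this argument the centrality of $I$ is not even needed; but including it makes the hypothesis match the statement and guards against the subtle point that for some of the example classes (e.g. trivial extensions of tree quiver algebras with radical square zero, or the locally hereditary class) membership in $\A$ was only established for algebras presented by a specific quiver-with-relations shape, and one wants $I$ tame enough that the shape is preserved. The main obstacle is thus purely expository: pinning down precisely in what sense ``$\A$ is closed under these factors'' — whether one means the bare implication ``$\tau$-tilting-finite $\Rightarrow$ representation-finite'' survives, or the stronger statement that the factor lies in one of the explicitly listed subclasses of $\A$. I would state and prove the former, which is clean and uniform, and remark that the latter holds case-by-case.
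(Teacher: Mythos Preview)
Your ``cleanest route'' contains a genuine error of direction. You write: ``the surjectivity of $\sttilt\Lambda\to\sttilt\Gamma$ shows $\Lambda$ is $\tau$-tilting-finite as well''. But a surjection from $\sttilt\Lambda$ onto a finite set says nothing about the size of $\sttilt\Lambda$; it only gives the implication $\Lambda$ $\tau$-tilting-finite $\Rightarrow$ $\Gamma$ $\tau$-tilting-finite, which is the wrong way for what you need. To prove $\Gamma\in\A$ you must start from the hypothesis that $\Gamma$ is $\tau$-tilting-finite and deduce that $\Lambda$ is $\tau$-tilting-finite, so that $\Lambda\in\A$ forces $\Lambda$ (and hence its factor $\Gamma$) to be representation-finite. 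The surjection from \cite{DIRRT} does not provide this reverse implication.

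This is exactly where the centrality hypothesis, which you dismiss as unnecessary, does the work. The paper invokes \cite[Theorem~11]{EJR} (Theorem~\ref{reduction} in the appendix): when $I$ lies in both the center and the radical of $\Lambda$, the posets $\sttilt\Lambda$ and $\sttilt(\Lambda/I)$ are isomorphic, not merely related by a surjection. That bijection immediately yields $\Gamma$ $\tau$-tilting-finite $\Rightarrow$ $\Lambda$ $\tau$-tilting-finite, after which your last two steps (apply $\Lambda\in\A$, then pass to the factor) are correct. So the overall architecture you propose is right, but the key lemma you cite is too weak; you need the EJR isomorphism, and for that centrality is indispensable. Your opening sentence also builds in the extraneous assumption that $\Lambda$ itself is $\tau$-tilting-finite, which should be dropped: only $\Lambda\in\A$ is given.
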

\begin{proof}
Let $\Lambda$ be in $\A$ and put $\Gamma:=\Lambda/I$, where $I$ is an ideal of $\Lambda$ contained in the center and the radical.
By \cite[Theorem 11]{EJR} (in the appendix), these algebras have the same poset of support $\tau$-tilting modules.
Therefore, if $\Gamma$ is $\tau$-tilting-finite,
then so is $\Lambda$.
By assumption,  it turns out that $\Lambda$ is representation-finite, so is $\Gamma$.
\end{proof}


\appendix
\section{The numbers of support $\tau$-tilting modules over weakly symmetric algebras of tubular type}

In this appendix, we give the numbers of support $\tau$-tilting modules of $A_i$ and $\Lambda_i$ as in Section \ref{tubular}. (See the introduction for the tables of the numbers.)

The following theorem plays an important role.

\begin{theorem}\cite[Theorem 11]{EJR}\label{reduction}
Let $I$ be a two-sided ideal of $\Lambda$ which is contained in the center and the radical of $\Lambda$.
Then we have an isomorphism of posets $\sttilt\Lambda$ and $\sttilt\Lambda/I$.
\end{theorem}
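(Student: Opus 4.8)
The plan is to transport the statement to the homotopy category of projectives and exploit the strong rigidity that centrality of $I$ imposes on morphisms between indecomposable projectives. By the Adachi--Iyama--Reiten correspondence \cite{AIR}, $\sttilt\Lambda$ is in order-isomorphism with the poset of (isomorphism classes of basic) two-term silting complexes in $\Kb(\proj\Lambda)$, where $P\geq Q$ precisely when $\Hom_{\Kb(\proj\Lambda)}(P,Q[1])=0$, and likewise for $\Gamma:=\Lambda/I$. So it suffices to prove that the base-change functor $F:=-\otimes_\Lambda\Gamma\colon\Kb(\proj\Lambda)\to\Kb(\proj\Gamma)$ restricts to an order-preserving bijection between the respective posets of two-term silting complexes. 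Since $I\subseteq\rad\Lambda$ is nilpotent, I would first reduce to the case $I^2=0$ by filtering along $\Lambda=\Lambda/I^m\to\cdots\to\Lambda/I^2\to\Lambda/I=\Gamma$: each successive kernel $I^k/I^{k+1}$ is again central, contained in the radical, and square-zero, so the general claim follows by composing the resulting poset isomorphisms.

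The decisive structural observation is that centrality forces $e_jIe_i=e_je_iI=0$ for $i\neq j$, so that $\Hom_\Lambda(P_i,P_j)=e_j\Lambda e_i=e_j\Gamma e_i=\Hom_\Gamma(\bar P_i,\bar P_j)$ whenever $i\neq j$, while on the diagonal $F$ merely divides the local ring $\End_\Lambda(P_i)=e_i\Lambda e_i$ by the central nilpotent ideal $Ie_i\subseteq\rad(e_i\Lambda e_i)$. Thus $F$ alters nothing about two-term complexes except the endomorphism rings of indecomposable projective summands, and only through a central nilpotent quotient. Concretely, a two-term complex $P=(P^{-1}\xrightarrow{d}P^0)$ over $\Lambda$ and its image $\bar P$ have the same \emph{shape} (the same multiplicities of each $P_i$ in each degree), and the off-diagonal entries of the differential are untouched; essential surjectivity of $F$ on such complexes then follows by lifting the diagonal entries of a prescribed $\Gamma$-differential along the surjections $e_i\Lambda e_i\twoheadrightarrow e_i\Gamma e_i$.

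The heart of the argument is to show that $F$ both \emph{preserves} and \emph{reflects} the vanishing of $\Hom_{\Kb}(P,Q[1])$, which simultaneously yields the presilting property (take $Q=P$), the order condition, and---via idempotent lifting for the local endomorphism rings---indecomposability and non-isomorphism of summands. Using the explicit presentation
\[\Hom_{\Kb(\proj\Lambda)}(P,Q[1])=\Hom_\Lambda(P^{-1},Q^0)\big/\{\,d_Q s+t\,d_P\,\},\]
where $s$ and $t$ range over $\Hom_\Lambda(P^{-1},Q^{-1})$ and $\Hom_\Lambda(P^0,Q^0)$, together with the short exact sequence $0\to P\otimes_\Lambda I\to P\to\bar P\to 0$ arising from $0\to I\to\Lambda\to\Gamma\to 0$, I would verify that the reduction map on these Hom-groups is surjective with kernel annihilated by the nilpotent $I$, so that a Nakayama-type argument upgrades ``$\Hom_{\Kb}(\bar P,\bar Q[1])=0$'' to ``$\Hom_{\Kb}(P,Q[1])=0$''. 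Granting this, the silting (generation) condition transfers because $F$ is a triangle functor that is dense on projectives, and because $\Lambda$ and $\Gamma$ share the same simples (as $\rad\Gamma=\rad\Lambda/I$), so the required count of $n$ indecomposable summands is preserved; order-preservation in both directions is then immediate from the same Hom-vanishing equivalence.

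The main obstacle I anticipate is exactly this transfer of $\Hom_{\Kb}$-vanishing: the homotopy relation mixes the diagonal (deformed) and off-diagonal (rigid) parts of the differentials, so the Nakayama and idempotent-lifting bookkeeping must be carried out carefully to guarantee that vanishing is \emph{reflected} and not merely preserved, and that isomorphism classes of complexes---not just objects---are matched bijectively. The reduction to $I^2=0$ is what keeps this tractable, since there $P\otimes_\Lambda I\cong\bar P\otimes_\Gamma I$ is itself a complex of $\Gamma$-modules, allowing the obstruction terms to be controlled directly over $\Gamma$.
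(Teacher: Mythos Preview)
The paper does not prove this theorem; it is quoted as \cite[Theorem 11]{EJR} and used as a black box throughout the appendix, so there is no ``paper's own proof'' to compare your proposal against.

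On its own merits, your plan is the natural one and is in the spirit of the argument in \cite{EJR}: translate to two-term silting via \cite{AIR}, reduce to $I^2=0$, and exploit the crucial observation that centrality forces $e_jIe_i=e_je_iI=0$ for $i\neq j$, so that $F=-\otimes_\Lambda\Gamma$ only perturbs the \emph{diagonal} parts of differentials by a central nilpotent. That observation is correct and is indeed the structural reason the theorem holds.

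However, what you have written is a strategy, not a proof. The step you yourself flag as ``the heart of the argument''---that $F$ \emph{reflects} the vanishing of $\Hom_{\Kb}(P,Q[1])$---is asserted via a ``Nakayama-type argument'' that is never actually executed. The difficulty is real: the quotient $\Hom_\Lambda(P^{-1},Q^0)/\{d_Qs+td_P\}$ is not naturally a module over anything on which $I$ acts centrally in a way that makes Nakayama apply directly, because the homotopy relation mixes left and right multiplication by entries of $d_P$ and $d_Q$. In \cite{EJR} this is handled by a careful inductive lifting argument (working entry by entry, or equivalently via the filtration you mention), and one must also check that the bijection respects \emph{isomorphism classes} of two-term complexes, not merely the presilting condition---your appeal to ``idempotent lifting for local endomorphism rings'' gestures at this but does not establish that two non-isomorphic silting complexes cannot become isomorphic after reduction. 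Until those two points are written out, the proposal remains a plausible outline rather than a proof.
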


For our algebra $\Lambda$, the strategy is the following.
\begin{enumerate}[(i)]
\item Find central elements which are in the radical.
\item Construct an ideal $I$ generated by the elements as in (i).
\item Consider the factor algebra $\Lambda/I$.
By Theorem \ref{reduction}, we have an isomorphism of posets $\sttilt\Lambda$ and $\sttilt\Lambda/I$.
Then, one counts the number or draws the Hasse quiver of $\sttilt\Lambda/I$.
If possible, we may find a nice algebra whose factor algebra is isomorphic to $\Lambda/I$ and which admits a well-known Hasse quiver of support $\tau$-tilting modules.
\end{enumerate}

\subsection{The number of $\sttilt A_i$}
First, let us discuss for $A_i$'s.
In any case, we can easily check that the following elements belong to the center.
\begin{center}
\begin{tabular}{l@{\hspace{1cm}}l}
$i=1$: $\alpha\gamma+\gamma\alpha$ and $\beta\sigma+\sigma\beta$; & 

$i=2$: $\alpha+\beta$;\\ 

$i=3$: --;&

$i=4$: $\beta\alpha-\gamma\delta-\xi\varepsilon$ and $\alpha\delta\gamma\beta$; \\ 

$i=5$: $\alpha\beta+\beta\alpha$; & 

$i=6$: $\alpha^2$ and $\beta\alpha\gamma$; \\ 

$i=7$: $\alpha\beta+\beta\alpha+\gamma\delta+\xi\varepsilon$;&

$i=8$: $\alpha\beta+\beta\alpha$; \\

$i=9$: $\beta\sigma+\varepsilon\gamma+\sigma\beta$; &

$i=10$: $\alpha\beta+\gamma\xi\delta+\xi\delta\gamma$;\\

$i=11$: $\alpha\beta+\gamma\xi$; &

$i=12$: $\alpha\gamma\beta+\beta\alpha\gamma$ and $\gamma\beta\delta\beta\alpha$; \\

$i=13$: $\alpha^2, \sigma\delta$ and $\beta\alpha\gamma$;&

$i=14$: $\alpha\delta\gamma\beta, \delta\gamma\beta\alpha, \gamma\beta\alpha\delta$ and $\beta\alpha+\gamma\delta\gamma\delta$; \\

$i=15$: $\alpha^2, \beta\alpha\delta$ and $\gamma\beta\sigma$;&

$i=16$: $\alpha^2, \delta\alpha\beta$ and $\sigma\alpha\beta$.
\end{tabular}
\end{center}
Let $I_i$ be the ideal of $A_i$ generated by the elements above and the socle, and $\overline{A_i} :=A_i/I_i$.

In the following, we feel free to utilize Theorem \ref{reduction} and refer to \cite{M} for support $\tau$-tilting modules over preprojective algebras of Dynkin type.

\subsubsection*{$i=1$}

It is seen that $\overline{A_1}$ is isomorphic to the factor algebra of the preprojective algebra of Dynkin type $\mathbb{A}_3$ by the intersection of the center and the radical.
This implies that $A_1$ has 24 support $\tau$-tilting modules.

\subsubsection*{$i=2$}

Observe that $\overline{A_2}$ is the Nakayama algebra presented by the quiver $\xymatrix{\bullet \ar@<2pt>[r]^x & \bullet \ar@<2pt>[l]^y}$ with relations $xy=0=yx$, whence there are 6 support $\tau$-tilting modules of $A_2$.

\subsubsection*{$i=3$}

$A_3$ is the preprojective algebra of type $\mathbb{D}_4$, which has 192 support $\tau$-tilting modules.

\subsubsection*{$i=5,6$}

It is obvious that $\overline{A_5}$ and $\overline{A_6}$ are isomorphic, which are furthermore isomorphic to $R(2AB)$ in Table 2 of \cite{EJR}.
Hence, $A_5$ and $A_6$ have 8 support $\tau$-tilting modules.

\subsubsection*{$i=7$}

By Theorem \ref{reduction}, we have an isomorphism of posets $\sttilt A_7\simeq \sttilt\overline{A_7}$.
Moreover, one observes that $\overline{A_7}$ is isomorphic to the factor algebra of the preprojective algebra $\Gamma$ of type $\mathbb{A}_4$ by the central elements in the radical, and the socle.
However, the socle of $\Gamma$ is not contained in the center, and so we can not apply Theorem \ref{reduction} to obtain the Hasse quiver of support $\tau$-tilting modules.

Now, let us apply Adachi's method \cite{Ad}.
We fix the numbering of the vertices of $\mathbb{A}_4$ by $\xymatrix{1 \ar@{-}[r] & 2 \ar@{-}[r] & 3 \ar@{-}[r] & 4}$ and
let $\overline{\Gamma}$ be the factor algebra of $\Gamma$ by the central elements in the radical.
We can still apply Theorem \ref{reduction} to get an isomorphism $\sttilt\Gamma\simeq \sttilt\overline{\Gamma}$.
Let $P$ be the indecomposable projective module of $\overline{\Gamma}$ corresponding to the vertex 1 and
define a subset $\mathcal{N}$ of $\sttilt\overline{\Gamma}$ by
\[\mathcal{N}:=\{N\in\sttilt\left(\overline{\Gamma}/\soc P\right)\ |\ P/\soc P\in \add N\ \mbox{and } \Hom_{\overline{\Gamma}}(N, P)=0 \}.\]
Here, $\soc P$ stands for the socle of $P$.
We see that $\mathcal{N}$ has 6 elements; see \cite{M} for example.
It follows from \cite[Theorem 3.3(1)]{Ad} that the Hasse quiver of $\sttilt\overline{\Gamma}$ can be constructed by $\sttilt\left(\overline{\Gamma}/\soc P\right)$ and the copy of $\mathcal{N}$.
A similar argument works for the indecomposable projective module $P'$ of $\overline{\Gamma}$ at the vertex 4 instead of $P$.
As $\overline{A_7}$ is isomorphic to the factor algebra of $\overline{\Gamma}$ by the socle of $P$ and $P'$, it turns out that $\overline{A_7}$ has precisely 12 support $\tau$-tilting modules fewer than $\overline{\Gamma}$, so than $\Gamma$.
Consequently, we obtaine that $A_7$ has 108 supoort $\tau$-tilting modules.

\subsubsection*{$i=8,9,11$}

We can use `String Applet' (\url{https://www.math.uni.-bielefeld.de/~jgeuenich/string-applet/}); apply it to $\overline{A_i}$.

\begin{remark}
The applet can be also run for $A_7$.
\end{remark}

\subsubsection*{$i=4, 10$}

We count the number of $\tau$-tilting modules over the factor algebra by each idempotent.
Let $\{e_1,\cdots,e_n\}$ be a complete set of primitive orthogonal idempotents of an algebra $\Lambda$ and $I$ be a subset of $\{1,\cdots,n\}$ (possibly, $I=\emptyset$).
We denote by $t_I$ the number of $\tau$-tilting modules of $\Lambda/(e)$, where $e=\sum_{i\in I}e_i$.
Here, $t_\emptyset$ means the number of $\tau$-tilting modules of $\Lambda$.
Note that the number of support $\tau$-tilting modules over $\Lambda$ is equal to $\sum_I t_I$.

We demonstrate the way of counting for $i=4$; it similarly works for $i=10$.
Putting $\Lambda:=\overline{A_4}$,
$e_i$ denotes the primitive idempotent corresponding to the vertex $i$.
\begin{enumerate}[(i)]
\item We observe that $\Lambda/(e_1)$ is the factor algebra of the Brauer tree algebra of the Brauer tree $\xymatrix{\circ \ar@{-}[r] & \circ \ar@{-}[r] & \circ \ar@{-}[r] & \circ}$ by some socles, and so one easily obtains $t_{\{1\}}=9$.
\item When $I$ has the vertex 2, $\Lambda/(e)$ is semisimple, so $t_I=1$; there are 8 cases.
\item In the cases that $I=\{3\}$ and $\{4\}$, $\Lambda/(e)$ is the preprojective algebra of type $\mathbb{A}_3$, so $t_I=13$; see \cite{M} for example.
\item For $I=\{1,3\}, \{1,4\}, \{3,4\}$, see the case of $i=2$; $t_I=3$.
\item We easily get $t_{\{1,3,4\}}=1$.
\end{enumerate}
We remain to count the number of $\tau$-tilting modules of $\Lambda$.
To do that, we use the GAP-package QPA, and then obtain $t_\emptyset=79$.
Consequently, one sees that there are 132 support $\tau$-tilting modules of $\Lambda$, so of $A_4$.

We only put the table for $A_{10}$.
\[\begin{array}{c||c|c|c|c|c|c|c||c}
I & \begin{subarray}{c}\mbox{{\scriptsize 4 points}}\\\mbox{{\scriptsize 3 points}}\end{subarray}  & \begin{subarray}{c}\{1,2\}\\\{1,3\}\\\{1,4\}\\\{2\}\end{subarray} & \begin{subarray}{c}\{2,3\}\\\{2,4\}\end{subarray} & \{3,4\} & \{1\}  & \begin{subarray}{c}\{3\}\\\{4\}\end{subarray} & \emptyset & \mbox{total}  \\\hline
t_I & 1\ \mbox{(5 cases)} & 2 & 1 & 3 & 10 & 8 & 72 & 116
\end{array}\]

\begin{remark}
It is not difficult to draw the Hasse quivers directly, but they are too large.
\end{remark}

\subsubsection*{$i=12-15$}

We directly construct the Hasse quiver of $\sttilt A_i$ as follows.
\begin{itemize}
\item The Hasse quiver of $\sttilt A_{12}$:
\begin{center}
$\xymatrix@C=1.2cm@R=0.2cm{
&\bullet\ar[r]\ar[drr]&\bullet\ar[r]\ar[dddr]&\bullet\ar[r]\ar[ddrr]&\bullet\ar[r]\ar[dr]&\bullet\ar[r]&\bullet\ar[r]&\bullet\ar[dddr]&\\
&&&\bullet\ar[dr]&&\bullet\ar[r]\ar[ur]&\bullet\ar[ddr]&&\\
&&\bullet\ar[r]\ar[ddr]&\bullet\ar[ddrr]\ar[ddr]&\bullet\ar[r]\ar[ddrr]&\bullet\ar[ur]&&&\\
\bullet\ar[r]\ar[uuur]\ar[dddr]&\bullet\ar[rr]\ar[ur]&&\bullet\ar[uuurr]&&&&\bullet\ar[r]&\bullet\\
&&&\bullet\ar[r]&\bullet\ar[ddr]&\bullet\ar[uuuurr]\ar[ddr]&\bullet\ar[ur]&&\\
&&\bullet\ar[rr]\ar[uuuur]&&\bullet\ar[r]&\bullet\ar[ur]\ar[drr]&&&\\
&\bullet\ar[r]\ar[ur]&\bullet\ar[r]\ar[uur]&\bullet\ar[ur]\ar[rr]&&\bullet\ar[r]&\bullet\ar[r]&\bullet\ar[uuur]&
}$
\end{center}

\item The Hasse quiver of $\sttilt A_{13}$:
\begin{center}
$\xymatrix@C=1.2cm@R=0.3cm{
&\bullet\ar[r]\ar[dddr]&\bullet\ar[r]\ar[drr]&\bullet\ar[rr]\ar[drr]&&\bullet\ar[r]&\bullet\ar[dddr]&\\
&&\bullet\ar[r]\ar[dddr]&\bullet\ar[urr]\ar[ddr]&\bullet\ar[d]&\bullet\ar[ddr]&&\\
&&&&\bullet\ar[ur]\ar[dddr]&&&\\
\bullet\ar[r]\ar[uuur]\ar[dddr]&\bullet\ar[uur]\ar[ddr]&\bullet\ar[r]&\bullet\ar[uur]\ar[ddr]&\bullet\ar[r]&\bullet\ar[uuur]\ar[dddr]&\bullet\ar[r]&\bullet\\
&&&\bullet\ar[d]&&&&\\
&&\bullet\ar[ur]\ar[drr]&\bullet\ar[uur]\ar[drr]&\bullet\ar[r]&\bullet\ar[uur]&&\\
&\bullet\ar[r]\ar[uuur]&\bullet\ar[rr]\ar[urr]&&\bullet\ar[r]&\bullet\ar[r]&\bullet\ar[uuur]&\\
}$
\end{center}

\item The Hasse quiver of $\sttilt A_{14}$:
\begin{center}
$\xymatrix@C=0.9cm@R=0.3cm{
&\bullet\ar[r]\ar[dddr]&\bullet\ar[r]\ar[dr]&\bullet\ar[rrrrr]&&&&&\bullet\ar[r]&\bullet\ar[dddr]&\\
&&&\bullet\ar[rrrr]\ar[drr]&&&&\bullet\ar[ur]\ar[dr]&&&\\
&&\bullet\ar[rr]\ar[uur]&&\bullet\ar[drrr]&\bullet\ar[rr]&&\bullet\ar[r]&\bullet\ar[dr]&&\\
\bullet\ar[uuur]\ar[r]\ar[dddr]&\bullet\ar[ur]\ar[dr]&\bullet\ar[r]&\bullet\ar[urr]\ar[drrr]&&&&\bullet\ar[r]&\bullet\ar[uuur]\ar[dddr]&\bullet\ar[r]&\bullet\\
&&\bullet\ar[r]\ar[dr]&\bullet\ar[rr]\ar[uur]&&\bullet\ar[urr]\ar[drr]&\bullet\ar[uur]\ar[rr]&&\bullet\ar[ur]&&\\
&&&\bullet\ar[rrrr]&&&&\bullet\ar[dr]&&&\\
&\bullet\ar[r]\ar[uuur]&\bullet\ar[ur]\ar[rrrrr]&&&&&\bullet\ar[r]\ar[uur]&\bullet\ar[r]&\bullet\ar[uuur]&\\
}$
\end{center}

\item The Hasse quiver of $\sttilt A_{15}$:
\begin{center}
$\xymatrix@C=1.2cm@R=0.3cm{
&\bullet\ar[r]\ar[dddr]&\bullet\ar[r]\ar[dr]&\bullet\ar[r]\ar[dr]&\bullet\ar[rr]\ar[drr]&&\bullet\ar[r]&\bullet\ar[dddr]&\\
&&\bullet\ar[r]\ar[drr]&\bullet\ar[urrr]&\bullet\ar[r]&\bullet\ar[r]&\bullet\ar[ddr]&&\\
&&&&\bullet\ar[dr]&&&&\\
\bullet\ar[r]\ar[uuur]\ar[dddr]&\bullet\ar[uur]\ar[ddr]&\bullet\ar[r]&\bullet\ar[uur]\ar[dr]&&\bullet\ar[r]&\bullet\ar[uuur]\ar[dddr]&\bullet\ar[r]&\bullet\\
&&&&\bullet\ar[uuur]\ar[drr]&&&&\\
&&\bullet\ar[r]\ar[drr]&\bullet\ar[r]\ar[uuur]&\bullet\ar[uur]\ar[dr]&\bullet\ar[r]\ar[dr]&\bullet\ar[uur]&&\\
&\bullet\ar[r]\ar[uuur]&\bullet\ar[rr]\ar[urrr]&&\bullet\ar[r]&\bullet\ar[r]&\bullet\ar[r]&\bullet\ar[uuur]&\\
}$
\end{center}
\end{itemize}

\begin{remark}
Note that $A_{16}$ is the opposite algebra of $A_{15}$.
So, it follows from \cite[Theorem 2.14]{AIR} that there is a bijection between $\sttilt A_{15}$ and $\sttilt A_{16}$.
We also remark that $\overline{A_{15}}$ is not representation-finite. \end{remark}

\subsection{The number of $\sttilt\Lambda_i$}

Next, we discuss for $\Lambda_i$'s.
One gets central elements.
\begin{center}
\begin{tabular}{l@{\hspace{1cm}}l}
$i=1$: $\alpha^2+\beta\gamma$ and $\beta\alpha\gamma$;&

$i=2$: $\alpha^2, \beta\gamma$ and $\gamma\beta$;\\

$i=3$: $\alpha+\beta, \sigma\gamma$ and $\gamma\sigma$;&

$i=4$: $\gamma\beta\alpha$ and $\beta\alpha\gamma+\alpha\gamma\beta$;\\

$i=5$: $\beta\gamma,\gamma\beta,\delta\sigma$ and $\sigma\delta$;&

$i=6$: $\alpha\beta$ and $\beta\alpha+\gamma\delta\gamma\delta$;\\

$i=7$: $\beta\delta,\delta\beta$ and $\gamma\beta\sigma$;&

$i=8$: --;\\

$i=9$: $\alpha\beta,\gamma\beta\alpha\delta,\delta\gamma\beta\alpha$ and $\xi\delta\gamma\epsilon$;&

$i=10$: $\gamma\xi-\sigma\delta$.
\end{tabular}
\end{center}
Let $I_i$ be the ideal of $\Lambda_i$ generated by the elements above and the socle.
Putting $\overline{\Lambda_i} :=\Lambda_i/I_i$,
we observe isomorphisms as follows.
\[\def\arraystretch{1.5}
\begin{array}{|@{\hspace{7pt}}c@{\hspace{7pt}}|@{\hspace{7pt}}c@{\hspace{7pt}}|@{\hspace{7pt}}c@{\hspace{7pt}}|@{\hspace{7pt}}c@{\hspace{7pt}}|@{\hspace{7pt}}c@{\hspace{7pt}}|@{\hspace{7pt}}c@{\hspace{7pt}}|@{\hspace{7pt}}c@{\hspace{7pt}}|@{\hspace{7pt}}c@{\hspace{7pt}}|@{\hspace{7pt}}c@{\hspace{7pt}}|@{\hspace{7pt}}c@{\hspace{7pt}}|}\hline
 &\overline{\Lambda_1}&\overline{\Lambda_2}& \overline{\Lambda_3} & \overline{\Lambda_4} & \overline{\Lambda_5} & \overline{\Lambda_6} & \overline{\Lambda_7} & \Lambda_8 &\overline{\Lambda_9} \\\hline
\simeq&\overline{A_5}&\overline{A_5}&\overline{A_2}&\overline{A_{12}}&\overline{A_{13}}&\overline{A_{14}}&\overline{A_{15}}&\Lambda_7^\op&\overline{A_3}\\\hline
\end{array}
\]
Here, $\Lambda^\op$ stands for the opposite algebra of an algebra $\Lambda$.
Thus it turns out that $\Lambda_i$ for every $i$ except $i=10$ is $\tau$-tilting-finite by Theorem \ref{finiteness1}.
Moreover, we have the number of support $\tau$-tilting modules of $\Lambda_i$ as in the introduction.

The left case $\Lambda_{10}$ will be dealt with in the forthcoming paper by the first and second named authors.

\if0

\newpage
\section{$\Lambda_{10}$}

Let $\Lambda$ be a finite dimensional algebra over an algebraic closed field $K$. Recall that the \emph{support algebra} $\mathsf{supp} M$ of a right $\Lambda$-module $M$ is the factor algebra of $\Lambda$ modulo the ideal which is generated by all idempotents satisfying $Me_i=0$, and $M$ is said to be support-rank $s$ if there exists exactly $s$ non-zero orthogonal idempotents $e_1,e_2, \dots, e_s$ such that $Me_i\neq 0$.

Let $\left | \Lambda \right |=n$, we denote by $a_s(\Lambda)$ the number of support $\tau$-tilting $\Lambda$-modules with support-rank $s$ for any $0\leqslant s\leqslant n$. Note that $a_n(\Lambda)$ is just the number of $\tau$-tilting $\Lambda$-modules. Then, the number of all support $\tau$-tilting $\Lambda$-modules are
\begin{center}
$a(\Lambda):=\sum\limits_{s=0}^{n}a_s(\Lambda)$.
\end{center}

We shall compute the numbers for $\Lambda_{10}$.

\begin{theorem}
Let $0\leqslant s \leqslant 5$, $a_s(\Lambda_{10})$ is as follows.
\renewcommand\arraystretch{1.2}
\begin{center}
\begin{tabular}{c|cccccccccccc}
$s$&$0$&$1$&$2$&$3$&$4$&5&$a(\Lambda_{10})$  \\ \hline
$a_s(\Lambda_{10})$&1&5&18&60&57+2(???)& \\
\end{tabular}
\end{center}
\end{theorem}

It is easy to check that $a_0(\Lambda_{10})=1$ and $a_1(\Lambda_{10})=5$.

\subsection{$a_2(\Lambda_{10})$}

Let $M$ be a support $\tau$-tilting $\Lambda_{10}$-module with support-rank 2, we denote by $(i, j)$ the supports $e_i$ and $e_j$ of $M$. Moreover, we denote by $b_{i,j}$ the number of support $\tau$-tilting $\Lambda_{10}$-modules satisfying
\begin{itemize}
  \item support rank is 2;
  \item $e_i$ and $e_j$ with $i\neq j$ are two supports.
\end{itemize}

We give an example to compute $b_{i,j}$. For example, if $(i, j)=(1,3)$, then $M$ becomes a $\tau$-tilting $B$-module with $B:=kQ/I$, where
\begin{center}
$\begin{array}{c}
Q: \begin{xy}
(0,0) *[o]+{1}="A", (12,0) *[o]+{3}="B", (20,0),
\ar @<-2pt> "A";"B"_{\xi}
\ar @<-2pt> "B";"A"_{\gamma}
\end{xy} \ \text{and}\ \
I:\begin{array}{c}
 \gamma\xi=0
\end{array}
\end{array}$
\end{center}
Then $b_{1,3}=3$. Similarly, we have
\begin{center}
\begin{tabular}{c|cccccccccccc}
$(i,j)$&$(1,2)$&$(1,3)$&$(1,4)$&$(1,5)$&$(2,3)$&$(2,4)$&$(2,5)$&$(3,4)$&$(3,5)$&$(4,5)$ \\ \hline
$b_{i,j}$&2&3&2&1&1&1&2&1&3&2 \\
\end{tabular}
\end{center}
Therefore, $a_2(\Lambda_{10})=18$.

\subsection{$a_3(\Lambda_{10})$}
Let $M$ be a support $\tau$-tilting $\Lambda_{10}$-module with support-rank 3, we denote by $(i, j, t)$ the supports $e_i$, $e_j$ and $e_t$ of $M$. Moreover, we denote by $c_{i,j,t}$ the number of support $\tau$-tilting $\Lambda_{10}$-modules satisfying
\begin{itemize}
  \item support rank is 3;
  \item $e_i$, $e_j$ and $e_t$ with $i\neq j\neq t$ are three supports.
\end{itemize}

We give an example to compute $c_{i,j,t}$. For example, if $(i, j,t)=(1,2,3)$, then $M$ becomes a $\tau$-tilting $B$-module with $B:=kQ/I$, where
\begin{center}
$Q:\xymatrix@C=1.2cm{3\ar@<0.5ex>[r]^{\gamma} & 1\ar@<0.5ex>[l]^{\xi}\ar[r]^{\eta} &2}$ and $I: \gamma\xi=0$.
\end{center}
Then, one can use the String Applet to check that $c_{1,2,3}=10$. Similarly, we have
\begin{center}
\begin{tabular}{c|cccccccccccc}
$(i,j,t)$&$(1,2,3)$&$(1,2,4)$&$(1,2,5)$&$(1,3,4)$&$(1,3,5)$\\ \hline
$c_{i,j, t}$&10&3&3&10&7\\
\end{tabular}
\end{center}
\begin{center}
\begin{tabular}{c|cccccccccccc}
$(i,j,t)$&$(1,4,5)$&$(2,3,4)$&$(2,3,5)$&$(2,4,5)$&$(3,4,5)$ \\ \hline
$c_{i,j, t}$&3&1&10&3&10\\
\end{tabular}
\end{center}
Therefore, $a_3(\Lambda_{10})=60$.

\subsection{$a_4(\Lambda_{10})$}
Let $M$ be a support $\tau$-tilting $\Lambda_{10}$-module with support-rank 4, we denote by $j$ the only one idempotent $e_j$ which is not a support of $M$. Moreover, we denote by $d_j$ the number of support $\tau$-tilting $\Lambda_{10}$-modules satisfying
\begin{itemize}
  \item support rank is 4;
  \item $e_{i_1}$, $e_{i_2}$, $e_{i_3}$ and $e_{i_4}$ with $i_1,i_2,i_3,i_4\neq j$ are four different supports.
\end{itemize}
Then, we have the following.

(1) If $j=5$, then $M$ becomes a $\tau$-tilting $B$-module with
\begin{center}
$Q:\xymatrix@C=1.2cm{3\ar@<0.5ex>[r]^{\gamma} & 1\ar@<0.5ex>[l]^{\xi}\ar[r]^{\eta} &2\\&4\ar[u]^{\alpha}&}$ and $I: \alpha\eta=\gamma\xi=0$.
\end{center}
One can use the String Applet to check that $d_5=25$. Similarly, $d_1=25$.

(2) If $j=4$, then $M$ becomes a $\tau$-tilting $B$-module with
\begin{center}
$Q:\xymatrix@C=1.2cm{1\ar@<0.5ex>[d]^{\xi}\ar[r]^{\eta} & 2\ar[d]^{\mu} \\3\ar@<0.5ex>[u]^{\gamma}\ar@<0.5ex>[r]^{\sigma}&5\ar@<0.5ex>[l]^{\delta}}$
\end{center}
and
\begin{center}
$I: \begin{array}{c}
\delta\gamma=\delta\sigma\delta\sigma=\xi\gamma\xi\gamma=0 \\
\xi\sigma=\eta\mu, \sigma\delta=\gamma\xi+\sigma\delta\sigma\delta
\end{array}$.
\end{center}
\textcolor[rgb]{1.00,0.00,0.00}{Then, by direct computation, we have $d_4=???$. Similarly, $d_2=???$.
}

(3) If $j=3$, then $M$ becomes a $\tau$-tilting $B$-module with
\begin{center}
$Q:\xymatrix@C=1.2cm{1\ar[r]^{\eta} & 2\ar[d]^{\mu} \\4\ar[u]^{\alpha} &5\ar[l]^{\beta}}$ and $I: \eta\mu=\mu\beta=\beta\alpha=\alpha\eta=0$.
\end{center}
One can use the String Applet to check that $d_3=7$.

Therefore, $a_4(\Lambda_{10})=57+2(???)$.

\newpage

\fi


\end{document}